\newcommand\DN{\newcommand} 
\numberwithin{equation}{section}
\newcounter{Const} \setcounter{Const}{0}
\DN\Ct{\refstepcounter{Const}c_{\theConst}}%\label{#1}
\DN\cref[1]{c_{\ref{#1}}}	
 \theoremstyle{definition}
 \newtheorem{theorem}{Theorem}[section]
 \newtheorem{lemma}[theorem]{Lemma}
 \newtheorem{corollary}[theorem]{Corollary}
\DN\lref[1]{Lemma~\ref{#1}}\DN\tref[1]{Theorem~\ref{#1}}\DN\pref[1]{Proposition~\ref{#1}}
\DN\sref[1]{Section~\ref{#1}}
\DN\ssref[1]{Subsection~\ref{#1}}
\DN\dref[1]{Definition~\ref{#1}} 
\DN\rref[1]{Remark~\ref{#1}} 
\DN\corref[1]{Corollary~\ref{#1}}
\DN\eref[1]{Example~\ref{#1}}
\DN\bs{\bigskip}\DN\ms{\medskip}
\DN\N{\mathbb{N}}\DN\R{\mathbb{R}}\DN\Q{\mathbb{Q}}\DN\C{\mathbb{C}}
\DN\map[3]{#1\!:\!#2\!\to\!#3}
\DN\ot{\otimes} %\DN\ts{\times }
\DN\PD[2]{\frac{\partial#1}{\partial#2}}
\DN\half{\frac{1}{2}}
\DN\elaw{\stackrel{\mathrm{law}}{=}}
\DN\eac{\stackrel{\mathrm{ac}}{\sim}}
\DN\Rd{\R ^d}
\DN\RtwoN{\R ^{2\N }}
\DN\Rtwo{\R ^2}
\DN\Rtwom{\R ^{2m}}
\DN\RdN{\R ^{d\N }} 
\DN\limi[1]{\lim_{#1\to\infty}} \DN\limz[1]{\lim_{#1\to0}}
\DN\limsupi[1]{\limsup_{#1\to\infty}}\DN\liminfi[1]{\liminf_{#1\to\infty}} 	
\DN\limsupz[1]{\limsup_{#1\to 0}}\DN\liminfz[1]{\liminf_{#1\to 0}} 	
\DN\As[1]{$ ($\textbf{#1}$)$}
\DN\Ass[1]{$ \{ $\textbf{#1}$\}$}
\DN\AAAA{\mathscr{A}}
\DN\BBBB{\mathscr{B} }
\DN\vvvv{\mathscr{C}_{\mathrm{path}}^{\infty *}} 
\DN\tttt{\sigma [ \varpi _t^{ \m *}] }
\DN\uuuu{\mathscr{F}_{0,t}^{\m *} } 
\DN\Am{\mathbf{A}_0^{\m }} \DN\Bm{\mathbf{B}_t^{\m }}
\DN\Ams{\mathbf{A}_0^{\m * }} \DN\Bms{\mathbf{B}_t^{\m * }}
\DN\mj{\m \le |j|}
\DN\Z{\mathbb{Z}}
\DN\im{|i| < \m } \DN\imm{|i| \ge \m }
\DN\jm{|j| < \m } \DN\jmm{|j| \ge \m }
\DN\m{m}\DN\nnn{n}
\DN\Bmsws{1_{\Bms }(\wms _t)} 
\DN\ABm{\w _0 \in \Am , \w _t \in \Bm }
\DN\wA{\w _0 \in \mathbf{A}_0} \DN\wB{ \w _t \in \mathbf{B}_t}
	\DN\pPi{ \pP ^{\infty}}
	\DN\pPix{\pPi _{\x }}
	\DN\sumjm{\sum_{ \jmm }}
\DN\X{\mathbf{X}}
\DN\Xm{\X ^{ \m }}
\DN\YmX{(\mathbf{Y}^{\m } , \mathbf{X}^{\m *})}
\DN\XmX{(\mathbf{X}^{\m } , \mathbf{X}^{\m *})}
\DN\Xms{ \mathbf{X}^{\m *} }
\DN\XM{\mathbf{X}^{[\m ]}}
\DN\XX{\mathfrak{X}}
\DN\xx{\mathfrak{x}}
\DN\XXms{\XX ^{\m *}}
\DN\wz{\w _0}
\DN\w{\mathbf{w}} \DN\wm{\w ^{ \m }} \DN\wms{\w ^{ \m * }} \DN\wmm{\w ^{[\m ]}}
\DN\x{\mathbf{x}} \DN\xm{\x ^{ \m }}\DN\xms{\x ^{ \m *}}
\DN\y{\mathbf{y}} \DN\ym{\y ^{ \m }}
\DN\sss{\mathfrak{s}}
\DN\K{\mathrm{K}}
\DN\bwm{b _{\w }^{ \m }} \DN\bwmi{b _{\w }^{ \m , i }}
\DN\OTwm{ \mathscr{O}_{T,\w }^{ \m }}
\DN\OTwme{ \mathscr{O}_{T,\w }^{ \m , \epsilon }}
\DN\OTwmz{ \mathscr{O}_{T,\w }^{ \m , 0 }}
\DN\OTwmee{ \mathscr{O}_{T,\w }^{ \m , \epsilon /2}}
\DN\QTwme{ \mathscr{Q}_{T,\w }^{ \m , \epsilon }}
\DN\RZ{\R _< ^{\mathbb{Z}}} 
\DN\Rm{\R _< ^{\m }} \DN\Rms{\R _<^{ \m *}} \DN\Rmm{\R _<^{[\m ]}}
\DN\Wm{\W ^{ \m }} \DN\Wms{\W ^{\m *}}
\DN\W{\mathrm{W}}
\DN\Pm{\pP ^{\m }}
\DN\Pmx{\Pm _{\x }}
\DN\Pwm{\pP _{\w }^{\m }}
\DN\Pxwm{\pP _{\x , \w }^{\m }}
\DN\mul{\mu \circ {\lab }^{-1}}
\DN\mui{\mu ^{\infty}} 
\DN\mulm{\mu \circ \lab _{\m }^{-1}}
\DN\mumi{\mu ^{\m } }
\DN\muwm{\pP _{\w }^{\m }(t)}
\DN\pirc{\pi _r^c}
\DN\piRc{\pi _{\rR }^c}
\DN\pir{\pi _r}
\DN\piR{\pi _{\rR }}
	\DN\nN{N}
	\DN\pP{P}
	\DN\qQ{Q}
\DN\rR{R} 
\DN\sS{S} \DN\sSS{\mathfrak{S}}
\DN\SrR{\sS _{\rR }}
\DN\pPm{\pP _{\mu }}
\DN\pPs{\Pts }
 \DN\KK{\mathcal{K} }
 \DN\muone{\mu ^{[1]}}
 \DN\mum{\mu ^{[\m ]}}
\DN\Sm{\sS ^{ \m }} \DN\SmSS{\Sm \ts \sSS }
\DN\Srm{\Sr ^{\m }} \DN\SrmSS{\Srm \ts \sSS }
\DN\SSrm{\sSS _r^{ \m }}
\DN\SSRm{\SSR ^{ \m }}
\DN\SSR{\sSS _{\rR }}
\DN\Ssi{\sSS _{\mathrm{s,i}}}
\DN\SR{\sS _{\rR }}
\DN\SRm{\sS _{\rR }^{ \m }}
\DN\dlog{\mathfrak{d}}
\DN\dmu{\dlog ^{\mu }}
\DN\dpsi{\dlog ^{\mupsi }}
\DN\dmuone{\dlog ^{\mu ^{1}}}
\DN\dmuhat{\hat{\dlog } ^{\mu }}
\DN\dom{\mathcal{D}} 
\DN\di{\dom _{\circ }}
\DN\Lm{L^2(\sSS ,\mu )}
\DN\Emu{\mathcal{E}^{\mu } }
\DN\E{\mathcal{E} }
\DN\ulab{\mathfrak{u} }
\DN\lab{\mathfrak{l} } 
\DN\labm{\lab _{\m } } 
\DN\labi{\lab ^i}
\DN\ulabm{\x ^{[\m ]}}
\DN\upath{\ulab _{\mathrm{path}}}
\DN\lpath{\lab _{\mathrm{path}}} 
\DN\upathm{\ulab _{\mathrm{path}}^{[\m ]}}
\DN\lpathm{\lab _{\mathrm{path}}^{[\m ]}}
\begin{document}
\title{\bf Dyson's model in infinite dimensions is irreducible}

\author{
Hirofumi Osada; Ryosuke Tsuboi}

\title{\bf Dyson's model in infinite dimensions is irreducible}

\author{Hirofumi Osada; Ryosuke Tsuboi}

\maketitle

\pagestyle{myheadings}%\pagestyle{plain}
\markboth{Hirofumi Osada; Ryosuke Tsuboi}
{Dyson's model in infinite dimensions is irreducible}

\maketitle

\begin{center}\texttt{
To appear in \\
{ the Festschrift in honor of Professor Fukushima}
}
\end{center}

\begin{abstract} 
Dyson's model in infinite dimensions is a system of Brownian particles interacting via a logarithmic potential with an inverse temperature of $ \beta = 2$. 
The stochastic process is given as a solution to an infinite-dimensional stochastic differential equation. Additionally, a Dirichlet form with the sine$ _2$ point process as a reference measure 
constructs the stochastic process as a functional of the associated configuration-valued diffusion process. In this paper, we prove that Dyson's model in infinite dimensions is irreducible. 
\bs 

\noindent \small 
\textsf{Keywords: Dyson's model, random matrices, irreducibility, diffusion process, interacting Brownian motion, infinite-dimensional stochastic differential equations, logarithmic potential, Gaussian unitary ensembles}

\ms 
\noindent 
\textbf{MSC2020: 60B20, 60H10, 60J40, 60J60, 60K35}
\end{abstract}

%\bigskip\noindent

\section{Introduction} \label{s:1}
This paper considers an infinite-dimensional stochastic differential equation (ISDE) of the form 
\begin{align}& \label{:10a} 
 X_t^i - X_0^i = B_t^i + 
 \frac{\beta }{2} \int_0^t
 \lim_{r\to\infty }\sum_{|X_u^i - X_u^j |< r, \ j\not= i}^{\infty} 
 \frac{1}{X_u^i - X_u^j } du \quad (i\in\mathbb{Z})
.\end{align}
For $ \beta = 2 $, the ISDE was introduced by Spohn \cite{Spo87}, who called it 
Dyson's model. Spohn derived \eqref{:10a} for $ \beta = 2$ 
as an informal limit of Dyson's Brownian motion in finite dimensions. 
Here, Dyson's Brownian motion is a solution of a finite-dimensional stochastic differential equation (SDE) such that 
\begin{align}\label{:10b}&
 X_t^{\nN ,i} - X_0^{\nN ,i} = B_t^i + 
 \frac{\beta }{2} \int_0^t
\sum_{ j\not= i}^{\nN }
 \frac{1}{X_u^{\nN ,i} - X_u^{\nN ,j}} du - 
 \frac{\beta }{2\nN } \int_0^t
 \frac{1}{X_u^{\nN ,i}} du 
.\end{align}
If $ \beta = 2$, then SDE \eqref{:10b} describes the dynamics of the eigenvalues of Gaussian unitary ensembles of order $ \nN \in \N $ \cite{Dys62,Meh04}. 
Spohn \cite{Spo87} constructed the limit dynamics as the $ L^2 $-Markovian semi-group given by the Dirichlet form 
\begin{align}\label{:10c}&
\mathcal{E} ( f , g ) = \int _{\sSS } \mathbb{D} [ f , g ] d\mu 
\end{align}
on $ \Lm $, where $ \sSS $ is the configuration space over $ \R $, 
$ \mathbb{D} $ is the standard carr\'{e} du champ on $ \sSS $, and 
$ \mu $ is the sine$ _{2}$ random point field. 
Furthermore, the domain of the Dirichlet form is taken to be the closure of the polynomials on 
$ \sSS $.

Let $ \mu $ be the sine$ _{\beta }$-random point field. 
If $ \beta = 2$, then $ \mu $ becomes a determinantal random point field whose $ \m $-point correlation function $ \rho ^{ \m }$ with respect to the Lebesgue measure is given by 
\begin{align} &\notag % \label{:10d}&
\rho ^{ \m }(\mathbf{x})= \det [ \KK _{\mathrm{sin}, 2} ( x^i , x^j ) ] _{ i , j = 1 }^{ \m }
.\end{align}
Here, $ \KK $ is the sine kernel given by 
\begin{align} &\notag % \label{:10e}&
 \KK ( x , y ) = \frac{\sin \{\theta \sqrt{2} (x-y)\} }{\pi (x-y)}
.\end{align}

Spohn \cite{Spo87} proved the closability of $ \mathcal{E} $ on $ \Lm $ with a predomain consisting of polynomials on $ \sSS $ for $ \beta = 2$. 

In \cite{o.dfa}, the first author proved that $ (\mathcal{E}, \di ^{\mu })$ is closable on $ \Lm $, 
and that its closure is a quasi-regular Dirichlet form. Here, $ \di $ is the set consisting of 
local and smooth functions on $ \sSS $ and $ \di ^{\mu }$ is given by 
\begin{align} &\notag % \label{:10f}&
\di ^{\mu } = \{ f \in \di \, ;\, \mathcal{E}_1 ( f , f ) < \infty \} 
.\end{align}
Thus, Osada constructed the $ L^2$-Markovian semi-group as well as the diffusion 
\begin{align} &\notag % \label{:10g}&
 \mathfrak{X}(t) = \sum_{i \in \mathbb{Z}} \delta _{X^i (t)}
\end{align}
associated with the Dirichlet form $ (\mathcal{E}, \dom )$ on $ \Lm $. 
We call $ \mathfrak{X}$ the unlabeled dynamics or unlabeled diffusion 
because the state space of the process is $ \sSS $. 
The unlabeled diffusion can be constructed for $ \beta = 1,4$ \cite{o.rm}, and 
the associated labeled process $ \mathbf{X} = (X^i)_{i\in \N }$ 
satisfies ISDE \eqref{:10a} for $ \beta = 1,2,4$ \cite{o.isde}. 
These cases have been proved as examples of the general theory developed in various papers \cite{o.tp,o.isde,o.rm,o.rm2}. 
In \cite{o.isde}, the meaning of a solution to an ISDE is a weak solution; the uniqueness of a weak solution of an ISDE and the Dirichlet form is left open in \cite{o.isde,o.rm}. 
(See \cite{IW} for the concept of strong and weak solutions of stochastic differential equations). 

Tsai \cite{tsai.14} solved ISDE \eqref{:10a} for all $ \beta \in [1,\infty )$. 
He proved the existence of a strong solution and the path-wise uniqueness of this solution. 
The method used by Tsai depends on an artistic coupling specific to Dyson's model. 
A non-equilibrium solution is obtained in the sense that the ISDE is solved by starting at each point 
in an explicitly given subset $ \sSS _0 \subset \sSS $ such that $ \mu (\sSS _0 ) = 1 $. 

The $ \mu $-reversibility of the associated unlabeled diffusion is left open in \cite{tsai.14}. 
Combining \cite{o.isde} and \cite{tsai.14}, we find that the unlabeled process given by the solution of \eqref{:10b} obtained in \cite{tsai.14} is reversible with respect to $ \mu $ for $ \beta = 1, 4$. 
For a general $ \beta > 0 $, note that the reversible probability measure of the unlabeled diffusion 
given by the solution to ISDE \eqref{:10a} is expected to be a sine$_{\beta } $-random point field. This remains an open problem, except for $ \beta = 1,2,4 $ \cite{o.rm}.

One of the authors and Tanemura \cite{o-t.tail} also proved the existence of a strong solution and the path-wise uniqueness of this solution for $ \beta = 1,2,4$. 
Their method can be applied to quite a wide range of examples. 
Using the result in \cite{o-t.tail}, Kawamoto {\em et al.} proved the uniqueness of Dirichlet forms \cite{k-o-t.udf}. 
They checked the infinite system of finite-dimensional SDEs with consistency (IFC) 
condition in \cite{k-o-t.ifc}, 
which plays an important role in the theory developed in \cite{o-t.tail}. 
Kawamoto and the second author of \cite{k-o-t.ifc} derive a solution to the ISDE from $ \nN $-particle systems \cite{k-o.fpa,k-o.du}.

The goal of this paper is to prove that the solution of \eqref{:10c} for $ \beta = 2 $ is irreducible (\tref{l:11}). 
In the remainder of this paper, we consider the case $ \beta = 2$. 
Hence, we take $ \mu $ to be the sine$ _2$ random point field. 

By definition, the configuration $ \sSS $ over $ \mathbb{R}$ is given by 
\begin{align} &\notag % \label{:11m}&
\sSS = \Big\{ \sss = \sum _i \delta_{s^i}\, ;\, \sss ( K ) < \infty \ \text{ for any compact } K \Big\} 
.\end{align}
We endow $ \sSS $ with the vague topology. 
Under the vague topology, $ \sSS $ is a Polish space. 
A probability measure on $ (\sSS , \mathscr{B}(\sSS ) )$ is called a random point field (also called a point process). Let 
\begin{align} &\notag % &\label{:11n}
\Ssi = \Big\{ \sss \in \sSS \, ;\, \sss (\{ s \} ) \le 1 \text{ for all } s \in \R ,\, \sss (\R ) = \infty \Big\} 
.\end{align}
In \cite{o.rm,o.col}, we proved that the sine$_2 $ random point field $ \mu $ satisfies 
\begin{align}\label{:11u}&
\mathrm{Cap} ((\Ssi )^c) = 0 
.\end{align}
Furthermore, $ \mu $ is translation invariant and tail trivial \cite{o-o.tail,ly.18}. 
Hence, by the individual ergodic theorem, we have that, for $ \mu $-a.s.\,$ \sss $, 
\begin{align}& \notag 
\limi{\rR } \frac{\sss ([-\rR ,\rR ])}{\rR } = \int _{\sSS } \sss ([-1,1]) d\mu 
.\end{align}
Then, we set 
\begin{align}&\label{:11o}
\sSS _n =\Big\{ \sss \in \sSS \, ;\,\frac{1}{n} \le \frac{\sss ([-\rR ,\rR ])}{\rR } \le n 
\text{ for all } \rR \in \N \Big\}
.\end{align}
Using the argument in the proof of Theorem 1 in \cite[p.127]{o.dfa}, we see that 
\begin{align}\label{:11p}&
\mathrm{Cap} \Big( \big(\bigcup_{n=1}^{\infty} \sSS _n \big)^c \Big) = 0 
.\end{align}

We write $ \mathbf{s}=({s^i})_{i\in\mathbb{Z}} \in \R ^{\mathbb{Z}}$, and we set 
\begin{align} &\notag % \label{:11o}&
\RZ = \{ \mathbf{s}=({s^i})_{i\in\mathbb{Z}}\in \R ^{\mathbb{Z}} \, ;\,{s^i}< {s^{i+1}} 
\ \text{ for all }\ i \, \} 
.\end{align}
Let $ \ulab $ be a map on $ \RZ $ such that 
$ \ulab ( \mathbf{s} ) = \sum_{i\in\mathbb{Z}} \delta_{s^i}$. 
Let $ \map{\lab }{\Ssi }{\RZ } $ be a function such that $ \ulab \circ \lab = \mathrm{id.}$. 
We call $ \ulab $ an unlabeling map and $ \lab $ a labeling map. 
There exist many labeling maps. We can take $ \lab $ in such a way that 
$ \lab^0 (\sss ) = \min\{ {s^i}; {s^i} \ge 0 ,\, \sss = \sum_{i\in\mathbb{Z}} \delta_{{s^i}} \} $ and 
$ \lab^i (\sss ) < \lab^{i+1}(\sss )$ for all $ i \in \mathbb{Z}$, where 
$ \lab (\sss ) = (\lab ^i (\sss ))_{i\in\mathbb{Z}}$. 
This choice of $ \lab $ is just for convenience and has no specific meaning. Let 
\begin{align}& \label{:11pp}
 \W = C([0,\infty); \RZ )
.\end{align}
Let $ \lpath = \{ \lpath(\mathfrak{w} ) _t \}_{t\in[0,\infty)}$ be the label path map generated by $ \lab $ (see \cite[pp.\,1148-1149]{o-t.tail} and (2.6) in \cite{k-o-t.ifc}). 
By definition, $ \lpath $ is the map from $ C([0,\infty);\Ssi ) $ to $ \W $ such that 
$ \lpath (\mathfrak{w} )_0 = \lab (\mathfrak{w}_0 )$, where 
$ \mathfrak{w} = \{\mathfrak{w} _t \}_{t\in[0,\infty)} \in C([0,\infty);\Ssi ) $. 

Let $ \X = ( X^i)_{i\in \mathbb{Z} }$ be a solution to ISDE \eqref{:10a} 
with $ \beta = 2 $ defined on a filtered space $ (\Omega , \mathscr{F}, P , \{ \mathscr{F}_t \} )$. 
We set 
\begin{align}\label{:11q}&
 \mui = \mul 
\end{align}
and assume that 
\begin{align}\label{:11r}&
 \mui = \pP (\X _0 \in \cdot ) 
.\end{align}
The associated unlabeled process 
\begin{align*}&
\mathfrak{X} = \sum_{i\in\mathbb{Z}} \delta_{X^i}
\end{align*}
is a $ \mu $-reversible diffusion given by the Dirichlet form $ (\E , \dom )$ in \eqref{:10c} 
\cite{o-t.tail}. 
Note also that the labeled process $ \X = \lpath (\mathfrak{X})$ obtained by 
the Dirichlet form in \cite{o.isde,o-t.tail} coincides with the solution obtained by Tsai \cite{tsai.14}. 

From \eqref{:11u} and $ \X = \lpath (\mathfrak{X})$ we find that 
\begin{align} &\notag % \label{:11w}&
\pP ( \X \in \W ) = 1
.\end{align} 
We set $ \w = (w^i)_{i\in\mathbb{Z}} \in \W $ and 
\begin{align} \label{:11v}& 
\pPi = \pP \circ \X ^{-1} ,\quad 
\pPix = \pPi (\cdot | \w _0 = \x )
.\end{align}
\begin{theorem}\label{l:11}
$ \{ \pP _{\x }^{\infty} \} $ is irreducible. That is, if 
$ \mathbf{A}$ and $ \mathbf{B} \in \mathscr{B}(\RZ ) $ satisfy 
% \9 
\begin{align}\label{:11b}& 
 \pPi ( \wz \in \mathbf{A}  ,\, \w _t \in \mathbf{B} ) = 0 
,\end{align}
then $\pPi ( \wz \in \mathbf{A}  ) = 0 $ or $\pPi ( \w _t \in \mathbf{B} ) = 0 $. 
\end{theorem}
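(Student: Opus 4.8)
The plan is to reduce \tref{l:11} to the irreducibility of the Dirichlet form $(\E ,\dom )$ of \eqref{:10c} and then to deduce that irreducibility from the tail triviality of the sine$_2$ point field $\mu $.

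First, I would translate the events in \eqref{:11b}, which concern the labeled process $\X $, into events concerning the configuration-valued diffusion $\XX $. Since $\mathrm{Cap}((\Ssi )^c)=0$ we have in particular $\mu ((\Ssi )^c)=0$; the unlabeling map $\ulab $ restricts to a Borel bijection of $\RZ $ onto $\Ssi $ with inverse $\lab $; and Dyson particles never collide, so $\pP (\X \in \W )=1$ and $\ulab (\X _s)=\XX _s$ for every $s$. Combining this with $\X =\lpath (\XX )$, with \eqref{:11q}--\eqref{:11r}, and with the $\mu $-reversibility of $\XX $, one obtains, with $\mathsf{A},\mathsf{B}\subseteq \sSS $ the unlabeled counterparts of $\mathbf{A},\mathbf{B}$,
\begin{align*}
 \pPi (\wz \in \mathbf{A},\,\w _t\in \mathbf{B})=\int _{\mathsf{A}}T_t\mathbf 1_{\mathsf{B}}\,d\mu ,\qquad \pPi (\wz \in \mathbf{A})=\mu (\mathsf{A}),\qquad \pPi (\w _t\in \mathbf{B})=\mu (\mathsf{B}),
\end{align*}
where $(T_t)_{t\ge 0}$ is the $\Lm $-Markovian semigroup associated with $(\E ,\dom )$; here the precise correspondence $\mathbf{A}\mapsto \mathsf{A}$, $\mathbf{B}\mapsto \mathsf{B}$ has to account for the index shifts that $\lpath $ may produce, which is done using that $\lpath $ is, given the initial label, a measurable bijection. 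Thus it suffices to show that $\int _{\mathsf{A}}T_t\mathbf 1_{\mathsf{B}}\,d\mu =0$ forces $\mu (\mathsf{A})=0$ or $\mu (\mathsf{B})=0$.

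This reduces to two facts. (a) $(\E ,\dom )$ is \emph{irreducible}, i.e.\ every Borel $\mathsf{C}\subseteq \sSS $ with $\mathbf 1_{\mathsf{C}}\in \dom $ and $\E (\mathbf 1_{\mathsf{C}},\mathbf 1_{\mathsf{C}})=0$ has $\mu (\mathsf{C})\in \{0,1\}$. (b) For an irreducible symmetric Dirichlet form whose associated process is a diffusion, $\int _{\mathsf{A}}T_t\mathbf 1_{\mathsf{B}}\,d\mu >0$ for every $t>0$ whenever $\mu (\mathsf{A})\mu (\mathsf{B})>0$ (positivity for some $t$ follows from irreducibility; the upgrade to all $t$ is standard using the diffusion property and the real-analyticity of $s\mapsto \int _{\mathsf{A}}T_s\mathbf 1_{\mathsf{B}}\,d\mu $ on $(0,\infty )$). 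For (a): since $\mathbb{D}$ is the standard carr\'e du champ, $\E (\mathbf 1_{\mathsf{C}},\mathbf 1_{\mathsf{C}})=\int _{\sSS }\mathbb{D}[\mathbf 1_{\mathsf{C}},\mathbf 1_{\mathsf{C}}]\,d\mu =0$ forces the gradient of $\mathbf 1_{\mathsf{C}}$ with respect to each single particle to vanish $\mu $-a.e.; hence $\mathbf 1_{\mathsf{C}}$ is, $\mu $-a.e., invariant under moving one particle, and by chaining finitely many such moves it is invariant under every compactly supported rearrangement of the configuration. One then upgrades this ``local-diffeomorphism invariance'' of $\mathbf 1_{\mathsf{C}}$ to genuine tail-measurability of $\mathsf{C}$ using structural properties of the sine$_2$ field (the same circle of ideas that underlies its tail triviality), whence, since $\mu $ is translation invariant and tail trivial \cite{o-o.tail,ly.18}, $\mu (\mathsf{C})\in \{0,1\}$. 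The overall scheme parallels the capacity argument in \cite[p.\,127]{o.dfa} used for \eqref{:11p}.

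I expect the main obstacle to be the analysis of the invariant set $\mathsf{C}$ in (a): (i) justifying that $\mathbf 1_{\mathsf{C}}$ can be approximated in $(\E ,\dom )$ by local smooth functions, so that the carr\'e-du-champ computation and the chaining argument hold quasi-everywhere despite the singular logarithmic drift $\dmu $ — here one uses \eqref{:11u}, \eqref{:11p}, the growth control \eqref{:11o}, and the translation invariance of $\mu $; and (ii) passing from invariance under compactly supported rearrangements to tail-measurability of $\mathsf{C}$, where the interaction of $\lab $ and the shifts produced by $\lpath $ with the tail $\sigma $-field must be handled carefully. Once these are established, tail triviality of $\mu $ closes the argument, the positivity statement (b) being a secondary, standard point.
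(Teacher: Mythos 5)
Your reduction of \tref{l:11} to the irreducibility of the unlabeled Dirichlet form $(\E ,\dom )$ breaks down at its very first step, and this is precisely the difficulty that forces the paper onto a different route. The identity
\begin{align*}
\pPi (\wz \in \mathbf{A},\,\w _t\in \mathbf{B})=\int _{\mathsf{A}}T_t\mathbf 1_{\mathsf{B}}\,d\mu
\end{align*}
cannot hold for any single Borel set $\mathsf{B}\subset \sSS $, because $\w _t=\lpath (\XX )_t$ is not a function of $\XX _t$: the path label transports the initial indices along the (non-colliding) trajectories, so $\w _t$ differs from $\lab (\XX _t)$ by a random index shift determined by the whole path $(\XX _s)_{0\le s\le t}$, not by the endpoint configurations. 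Concretely, for $\mathbf{B}=\{\w \in \RZ \,;\,w^0\in[5,6]\}$ the event $\{\w _t\in \mathbf{B}\}$ asks that the particular particle labeled $0$ at time $0$ sit in $[5,6]$ at time $t$; this is not $\sigma [\XX _t]$-measurable, and $\pPi (\w _t\in \mathbf{B})$ does not equal $\mu (\mathsf{B})$ for either natural candidate $\mathsf{B}$ ($\lab ^{-1}$-preimage or $\ulab $-image). \tref{l:11} is a statement about the labeled dynamics and is strictly stronger than irreducibility of the unlabeled semigroup: the latter would not, by itself, rule out that a tagged particle is forever confined by its neighbours while the configuration as a whole still mixes. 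This is why the paper never passes to $(\E ,\dom )$; it conditions on the outer coordinates $\Xms $, proves irreducibility of the resulting finite-dimensional time-inhomogeneous diffusion via positivity of the heat kernel of \eqref{:38y}--\eqref{:38z} (\tref{l:12}, which rests on the Lyons--Zheng regularity estimates of \lref{l:31}), and then lets $\m \to \infty $ using martingale convergence together with the triviality of the path tail $\sigma $-field $\vvvv $ (\lref{l:52}) --- a statement about $\pPi $ itself, not merely the tail triviality of $\mu $.

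A second, independent gap: even for the unlabeled statement, your step (a) is a sketch rather than a proof. The passage from $\E (\mathbf 1_{\mathsf{C}},\mathbf 1_{\mathsf{C}})=0$ to invariance under compactly supported rearrangements, and from there to tail measurability of $\mathsf{C}$, is exactly the hard analytic content (one must also justify manipulating $\mathbf 1_{\mathsf{C}}\in \dom $ through the core $\di ^{\mu }$ in the presence of the singular drift); you yourself flag this as ``the main obstacle'' without supplying the argument. Your step (b) --- that an irreducible symmetric Markov semigroup is positivity improving for every $t>0$ --- is indeed standard, but it is the least of the issues. I recommend abandoning the reduction to the unlabeled form and arguing directly on the labeled process via the conditional finite-dimensional scheme of Theorems \ref{l:12} and \ref{l:13}.
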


We do not know whether $ \X $ has an invariant probability measure that is absolutely continuous with respect to $ \mui $. 
If this is the case, then \tref{l:11} implies that $ \X $ is irreducible in the usual sense. 

From \tref{l:11}, \eqref{:11q}, and \eqref{:11v}, we immediately have the following. 
\begin{corollary}\label{l:11a}
The solution of \eqref{:10a} with $ \beta = 2$ is irreducible in the sense that, if 
$ \mathbf{A} $ and $ \mathbf{B} \in \mathscr{B}(\RZ ) $ satisfy 
% \9 
\begin{align*}&
\pP (\X _0 \in \mathbf{A},\, \X _t \in \mathbf{B}) = 0 
,\end{align*}
then $ \pP (\X _0 \in \mathbf{A}) = 0 $ or $ \pP (\X _t \in \mathbf{B}) = 0 $. 
\end{corollary}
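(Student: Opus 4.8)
The plan is to recast \tref{l:11} as the irreducibility of a Dirichlet form and then close it using the tail triviality of the sine$_2$ point field $\mu $. Since $\X =\lpath (\XX )$ and $\mathrm{Cap}((\Ssi )^c)=0$ by \eqref{:11u}, and $\lab $ is a Borel isomorphism of $\Ssi $ onto $\RZ $, pushing the unlabeled Dirichlet form \eqref{:10c} forward by $\lab $ exhibits $\{\pPi _{\x }\}$ as the law family of the conservative $\mui $-symmetric diffusion on $\RZ $ associated with $\E ^{\infty }(f,g)=\int _{\RZ }\sum _{i\in \Z }\partial _{x^i}f\,\partial _{x^i}g\,d\mui $; working on the connected open cone $\RZ $ rather than on $\sSS $ is what makes the localization below clean. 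Because $\pPi (\wz \in \mathbf{A},\,\w _t\in \mathbf{B})=\int _{\mathbf{A}}T_t1_{\mathbf{B}}\,d\mui $, hypothesis \eqref{:11b} says $T_t1_{\mathbf{B}}=0$ $\mui $-a.e.\ on $\mathbf{A}$ for every $t>0$. The set $\{x:T_s1_{\mathbf{B}}(x)=0\text{ for all }s>0\}$ is absorbing, hence $T$-invariant; if $\E ^{\infty }$ is irreducible it is $\mui $-null or $\mui $-co-null, and in the co-null case $\mui (\mathbf{B})=\int T_t1_{\mathbf{B}}\,d\mui =0$ by conservativity, while in the null case $\mui (\mathbf{A})=0$. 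Hence it suffices to prove that $(\E ^{\infty },\dom (\E ^{\infty }))$ is irreducible, i.e.\ that $1_{\mathbf{A}}\in \dom (\E ^{\infty })$ with $\E ^{\infty }(1_{\mathbf{A}},1_{\mathbf{A}})=0$ forces $\mui (\mathbf{A})\in \{0,1\}$.

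So fix such an $\mathbf{A}$. From $\E ^{\infty }(1_{\mathbf{A}},1_{\mathbf{A}})=0$ and the shape of the carr\'{e} du champ, $\partial _{x^i}1_{\mathbf{A}}=0$ for every $i\in \Z $. For $n\in \N $ put $\mathcal{G}_n=\sigma (x^i;\ |i|>n)$ and disintegrate $\mui $ along $\mathcal{G}_n$, writing $\mui _{\xi }$ for the conditional law of $(x^i)_{|i|\le n}$ given the outer coordinates $\xi $. By the (logarithmic) quasi-Gibbs property of $\mu $ (\cite{o.dfa,o.rm}), for a.e.\ $\xi $ the measure $\mui _{\xi }$ is absolutely continuous on the connected open cone $\{\xi ^-<x^{-n}<\dots <x^{n}<\xi ^+\}\subset \R ^{2n+1}$ cut out by the interlacing with $\xi $, with a density that is strictly positive there (comparable to the logarithmic Gibbs weight) and regular enough that $\int \sum _{|i|\le n}|\partial _{x^i}\cdot |^2\,d\mui _{\xi }$ is a Dirichlet form. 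Combining this with $\partial _{x^i}1_{\mathbf{A}}=0$ for $|i|\le n$ and the disintegration of $\E ^{\infty }$ into these conditional forms, the elementary fact that a Sobolev function with vanishing gradient on a connected open set is a.e.\ constant shows that, for a.e.\ $\xi $, $1_{\mathbf{A}}$ is a.e.\ constant as a function of $(x^i)_{|i|\le n}$; thus $1_{\mathbf{A}}$ coincides $\mui $-a.e.\ with a $\mathcal{G}_n$-measurable function. As $n$ is arbitrary, $1_{\mathbf{A}}$ is, modulo $\mui $-null sets, $\bigcap _n\mathcal{G}_n$-measurable.

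It remains to see $\bigcap _n\mathcal{G}_n$ is $\mui $-trivial. Transported by $\lab $, a $\bigcap _n\mathcal{G}_n$-measurable function is a function of the sine$_2$ configuration that, for every $n$, is unchanged when the $2n+1$ innermost points (those labeled $|i|\le n$) are moved; invoking the number rigidity of the sine$_2$ point field (the number of points of a configuration in a bounded interval is $\mu $-a.s.\ determined by the configuration outside it), such a function agrees $\mu $-a.s.\ with a tail-measurable function of the configuration. Since $\mu $ is tail trivial \cite{o-o.tail,ly.18}, $1_{\mathbf{A}}$ is $\mui $-a.e.\ constant and $\mui (\mathbf{A})\in \{0,1\}$. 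This proves \tref{l:11}, and \corref{l:11a} follows from \eqref{:11q} and \eqref{:11v}.

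The main difficulty is the localization step: one must extract from the quasi-Gibbs property the precise absolutely continuous, locally Sobolev structure of the conditional laws $\mui _{\xi }$ — with sufficient control near the faces of the cone that the indicator $1_{\mathbf{A}}$ genuinely belongs to the conditional Sobolev space — and one must justify disintegrating the infinite-dimensional form $\E ^{\infty }$ so that $\E ^{\infty }(1_{\mathbf{A}},1_{\mathbf{A}})=0$ descends to the conditional forms. A secondary delicate point is the $\sigma $-field identification in the third paragraph: the number of points of a configuration in a bounded window is a priori not tail-measurable, so this step hinges on the number rigidity of $\mu $ (or must be arranged to avoid it).
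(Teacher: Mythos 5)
For \corref{l:11a} itself the paper's proof is exactly the single line you end with: by \eqref{:11q}, \eqref{:11r}, and \eqref{:11v} one has $ \pP (\X _0 \in \mathbf{A},\, \X _t \in \mathbf{B}) = \pPi ( \wz \in \mathbf{A},\, \w _t \in \mathbf{B}) $, so the corollary is a restatement of \tref{l:11}; that part of your proposal is correct and identical to the paper. Everything preceding it is an independent re-proof of \tref{l:11}, and there your route genuinely diverges from Section 5. Both arguments share the skeleton ``condition on the outer particles, settle finitely many inner particles, then kill the tail,'' but you condition statically on the time-$0$ outer coordinates and argue at the level of the Dirichlet form (a zero-energy indicator is constant on the connected conditional slices, hence $\bigcap_n \sigma (x^i : |i|>n)$-measurable, hence constant by tail triviality), whereas the paper conditions on the whole outer trajectory $ \wms $, proves positivity of the heat kernel of the resulting time-inhomogeneous finite-dimensional diffusion (\tref{l:12}, via the Lyons--Zheng decomposition and the H\"older estimates of \lref{l:31}), and then glues with a reverse-martingale argument over the decreasing $\sigma$-fields $ \uuuu $ together with \lref{l:52}. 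Your approach avoids the heat-kernel analysis entirely; the paper's avoids having to disintegrate the infinite-dimensional form.

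Two steps of your sketch are load-bearing and not yet proofs, as you partly acknowledge. First, the localization: you must show that $ 1_{\mathbf{A}} \in \dom $ with $ \E (1_{\mathbf{A}},1_{\mathbf{A}})=0 $ really forces the conditional restrictions to lie in the local Sobolev space of the conditional density with vanishing weak gradient; this needs two-sided bounds for the quasi-Gibbs density on compact subsets of the open cone and a Fatou-type argument along an approximating sequence from $ \di ^{\mu }$, none of which is supplied. Second, identifying $\bigcap_n \sigma (x^i : |i|>n)$ with the spatial tail requires number rigidity of the sine$_2$ field (Ghosh, Ghosh--Peres), an external theorem the paper never invokes; the paper instead imports the triviality of $ \vvvv $ wholesale from Theorem 5.3 of \cite{o-t.tail} (\lref{l:52}) and explicitly warns that this is the difficult part, so you are trading one hard input for another rather than eliminating it. Finally, a small slip in the reduction: \eqref{:11b} gives $ T_t 1_{\mathbf{B}} = 0 $ $ \mui $-a.e.\ on $ \mathbf{A}$ only for the one $ t $ appearing in the statement, not for all $ s>0 $, so the absorbing-set argument as written does not apply; replace it by the standard splitting $ 0 = \langle T_{t/2}1_{\mathbf{A}}, T_{t/2}1_{\mathbf{B}}\rangle $ combined with the fact that an irreducible symmetric Markov semigroup is positivity improving for every positive time.
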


To prove \tref{l:11}, we prepare two results, \tref{l:12} and \tref{l:13}. 

For $ \mathbf{x} \in \R ^{\mathbb{Z}} $, we set 
$ \xm = (x^i)_{ \im }$ and $ \xms = (x^i)_{|i| \ge \m }$. We set 
\begin{align} \notag &% \label{:12p}&
\Rm = \{ \xm = ( x ^i )_{| i | < \m } ; 
 x^i < x^{i + 1 } \text{ for all } -\m < i < \m - 1 \} 
, \\ &\notag 
\Rms = \{ \xms = ( x ^i )_{| i | \ge \m } ;\, 
 x^i < x^{i + 1 } \text{ for all }i < - \m ,\, \m \le i \} 
.\end{align}
Let $ \Xm = ( X ^i )_{|i| < \m } $ and $ \Xms = ( X ^{i})_{|i| \ge \m }$. 
Let $ \wms = ( w ^i)_{|i|\ge \m }$ for $ \w = ( w^i )_{ i \in \Z }$. 
We introduce the regular conditional probabilities such that 
\begin{align}\label{:12q}
 \Pwm =& \pP ( \Xm \in \,\cdot \, | \Xms = \wms ) 
,\\ \notag 
\0 =& \pP ( \Xm \in \cdot | \X _0^{\m } = \xm , \Xms = \wms ) 
.\end{align}
By construction, $ \Xm $ under $ \{ \0 \} $ is a time-inhomogeneous diffusion. 
The heat equations describing the transition probability density are given by \eqref{:38y} and \eqref{:38z}. 

\begin{theorem}	\label{l:12} 
Let $ \1 = \pP \circ (\Xms )^{-1}$. 
For each $ \m \in \N $, $ \{ \0 \} $ is irreducible for $ \1 $-a.s.\,$ \w $. 
That is, \8 % \9 
\begin{align}\label{:12a}& 
 \Pwm ( \wm _0 \in \mathbf{A},\, \wm _t \in \mathbf{B}) = 0 
 \quad \text{for $ \1 $-a.s.\,$ \w $}
,\end{align}
then 
 $ \Pwm ( \wm _0 \in \mathbf{A}) = 0 $ for $ \1 $-a.s.\,$ \w $ 
or $ \Pwm ( \wm _t \in \mathbf{B}) = 0 $ for $ \1 $-a.s.\,$ \w $. 
\end{theorem}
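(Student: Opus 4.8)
The plan is to reduce \tref{l:12} to the strict positivity of the transition density of the finite-dimensional conditional diffusion $ \Xm $, and then to read off Markovian irreducibility. First, using the construction recalled above and the heat equations \eqref{:38y} and \eqref{:38z}, I would fix the picture: for $ \1 $-a.s.\ $ \w $ the process $ \Xm =(X^i)_{\im } $ under $ \{\0\} $ is a time-inhomogeneous diffusion on the open, bounded, convex moving domain
\[
 \mathcal{D}_u(\w )=\{(x^i)_{\im }\in \Rm :\ w_u^{-\m }<x^{-(\m -1)}<\cdots <x^{\m -1}<w_u^{\m }\},
\]
solving $ dX_u^i = dB_u^i + \bwmi (u,\Xm _u)\,du $ $ (\im )$, with drift $ \bwmi (u,x)=\sum_{\jm ,\,j\ne i}(x^i-x^j)^{-1}+\lim_{r\to\infty }\sum_{\jmm ,\,|x^i-w_u^j|<r}(x^i-w_u^j)^{-1} $, the interaction tail converging for $ \1 $-a.s.\ $ \w $ by the tail estimates for $ \mu $ already used in the construction of the ISDE. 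The generator $ \tfrac{1}{2}\Delta +\bwm (u,\cdot )\cdot \nabla $ is uniformly elliptic with drift smooth and locally bounded on the space--time tube $ \bigcup_{0\le u\le t}\{u\}\times \mathcal{D}_u(\w )$, and $ \bwm $ diverges repulsively at the boundary, so $ \Xm $ never leaves $ \mathcal{D}_u(\w )$ and admits a transition density $ p_{\w }^{\m }(s,x,t,y) $ on $ \mathcal{D}_t(\w )$ with $ \0 (\wm _t\in \mathbf{B})=\int_{\mathbf{B}}p_{\w }^{\m }(0,\xm ,t,y)\,dy $.

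The key step is to prove that, for $ \1 $-a.s.\ $ \w $, one has $ p_{\w }^{\m }(s,x,t,y)>0 $ whenever $ 0\le s<t $, $ x\in \mathcal{D}_s(\w ) $ and $ y\in \mathcal{D}_t(\w ) $. Given $ s<t $ and such $ x,y $, I would pick a compact connected tube $ \K $ inside $ \bigcup_{s\le u\le t}\{u\}\times \mathcal{D}_u(\w )$ joining $ (s,x) $ to $ (t,y) $; on the event that the path of $ \Xm $ stays in $ \K $ up to time $ t $, the Girsanov density relating $ \0 $ to the law of Brownian motion started at $ x $ is bounded away from $ 0 $ and $ \infty $, while Brownian motion traverses such a tube with positive probability, so $ \0 (\wm _t\in \mathbf{O})>0 $ for every nonempty open $ \mathbf{O}\subset \mathcal{D}_t(\w )$ and every starting point in $ \mathcal{D}_s(\w )$. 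Joint continuity of $ p_{\w }^{\m } $ (parabolic regularity) then upgrades this to pointwise positivity; alternatively one may invoke the Stroock--Varadhan support theorem together with the trivial controllability of the associated control system, or the parabolic strong maximum principle applied to the nonnegative, not identically zero solution $ p_{\w }^{\m } $ on the connected set $ \mathcal{D}_t(\w )$.

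With positivity in hand, I would fix $ \w $ in the $ \1 $-full set where the above holds and disintegrate $ \Pwm =\int \0\,\lambda _{\w }^{\m }(d\xm )$, where $ \lambda _{\w }^{\m } $ is the law of $ \X _0^{\m } $ under $ \Pwm $, so that
\[
 \Pwm (\wm _0\in \mathbf{A},\,\wm _t\in \mathbf{B})=\int_{\mathbf{A}}\Big(\int_{\mathbf{B}}p_{\w }^{\m }(0,\xm ,t,y)\,dy\Big)\,\lambda _{\w }^{\m }(d\xm ).
\]
By the positivity step the inner integral is either strictly positive for all $ \xm \in \mathcal{D}_0(\w )$, or it vanishes for all such $ \xm $ — the latter exactly when $ \Pwm (\wm _t\in \mathbf{B})=0 $ — so vanishing of the left-hand side forces $ \Pwm (\wm _t\in \mathbf{B})=0 $ or $ \lambda _{\w }^{\m }(\mathbf{A})=\Pwm (\wm _0\in \mathbf{A})=0 $; this is the $ \w $-wise irreducibility of $ \{\0\} $. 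To pass to the averaged formulation displayed in \eqref{:12a} I would use the $ \mu $-reversibility and translation invariance of $ \mathfrak{X}$: if both $ |\mathbf{A}\cap \Rm |>0 $ and $ |\mathbf{B}\cap \Rm |>0 $, one can find a common bounded box meeting both, and the confining pair $ (w_u^{-\m },w_u^{\m }) $ has positive probability of remaining spread far enough apart throughout $ [0,t] $ that $ \mathcal{D}_0(\w )$ meets $ \mathbf{A} $ and $ \mathcal{D}_t(\w )$ meets $ \mathbf{B} $ at once, contradicting the $ \w $-wise dichotomy under \eqref{:12a}; hence $ |\mathbf{A}\cap \Rm |=0 $ or $ |\mathbf{B}\cap \Rm |=0 $, and since $ \lambda _{\w }^{\m } $ is equivalent to Lebesgue measure on $ \mathcal{D}_0(\w )$ this gives $ \Pwm (\wm _0\in \mathbf{A})=0 $ for $ \1 $-a.s.\ $ \w $, or $ \Pwm (\wm _t\in \mathbf{B})=0 $ for $ \1 $-a.s.\ $ \w $.

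The main obstacle is the positivity statement in the second paragraph: since the drift of $ \Xm $ is unbounded near the boundary of configuration space, a transition-density lower bound cannot simply be quoted for a fixed uniformly elliptic operator and must be produced by localizing to compact subsets of the moving interior, using crucially that $ \Xm $ stays strictly inside $ \mathcal{D}_u(\w )$ for $ \1 $-a.s.\ $ \w $ (no collisions, as $ \beta =2\ge 1 $). A secondary burden is keeping track of the $ \1 $-null exceptional sets throughout — convergence of the interaction series, existence and joint continuity of $ p_{\w }^{\m } $, equivalence of $ \lambda _{\w }^{\m } $ with Lebesgue measure on $ \mathcal{D}_0(\w )$ — and the passage from the $ \w $-wise statement to the averaged one, which I expect to be short.
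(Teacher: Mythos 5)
Your overall strategy coincides with the paper's: both proofs reduce \tref{l:12} to the strict positivity of the (joint) density of $(\wm _0 , \wm _t)$ under $\Pwm $ on the moving domain bounded by $w^{-\m }$ and $w^{\m }$, conclude from \eqref{:12a} that $\mathbf{A}$ or $\mathbf{B}$ must be Lebesgue-null there, and read off the alternative. Where you differ is in how positivity is obtained. The paper first proves (\lref{l:31}, via the Lyons--Zheng decomposition of \lref{l:23}) that the drift $\bwm (\xm , t)$ is H\"older continuous in $t$ and Lipschitz in $\xm $ on the $\epsilon $-separated region $\QTwme $, then invokes classical parabolic theory for \eqref{:38y}--\eqref{:38z} to obtain a positive continuous Dirichlet fundamental solution, and finally lets $\epsilon \to 0$ using the domination of the Dirichlet kernel by the full kernel. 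You instead propose Girsanov on compact space--time tubes, backed up by the support theorem or the parabolic strong maximum principle; that route is legitimate and, for the absolute-continuity step, does not require time-regularity of the drift.

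The one genuine gap is the clause you dispatch in passing --- that the drift is ``smooth and locally bounded on the space--time tube, the interaction tail converging for $\1 $-a.s.\ $\w $.'' This is exactly the paper's main technical effort. The tail $\lim_{\nnn }\sum_{\m \le |j| \le \nnn }(x^i - w_t^j)^{-1}$ converges for each fixed $t$, but local boundedness and continuity \emph{uniformly in $t$ over $[0,T]$} --- needed to run Girsanov on a tube, and a fortiori to invoke parabolic regularity or the strong maximum principle for $p$ --- require the estimates \eqref{:31e}--\eqref{:31h}: one controls $\sum_{\jmm } |j|\, |x^i - w_t^j|^{-3}$ using the a.s.\ linear growth of $|w_t^j|$ in $|j|$ coming from \eqref{:11p}, and then applies Young's inequality against third moments of the Brownian increments furnished by the Lyons--Zheng decomposition. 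If you supply this (or quote \lref{l:31}), your argument closes. Two smaller points to make explicit: the equivalence of the initial law $\lambda _{\w }^{\m }$ with Lebesgue measure on the simplex, which you use both in the disintegration and in the final contradiction argument; and the fact that the noise driving $\Xm $ under $\{ \0 \}$ after conditioning on the whole path $\wms $ is still a Brownian motion, which is the IFC/consistency property from \cite{k-o-t.ifc} rather than something automatic.
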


\begin{theorem}	\label{l:13}
Let $ \Pm = \pP \circ (\Xm )^{-1}$. 
For each $ \m \in \N $, the process $ \Pm $ is irreducible. That is, 
\8 
% \9 
\begin{align}\label{:13a}& 
\Pm ( \wm _0 \in \mathbf{A} ,\, \wm _t \in \mathbf{B}) = 0 
,\end{align}
then $ \Pm ( \wm _0 \in \mathbf{A}) = 0 $ or $ \Pm ( \wm _t \in \mathbf{B}) = 0 $. 
\end{theorem}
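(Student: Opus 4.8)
The plan is to deduce \tref{l:13} from \tref{l:12} by disintegrating the law of the finite subsystem $\Xm $ over the tail $\Xms $. First I would record the identity
\[
\Pm (C) \;=\; \int \Pwm (C)\,\1 (d\w )
\qquad\text{for every Borel } C \subset C([0,\infty );\Rm ).
\]
This is nothing but the tower property of conditional expectation together with the existence of regular conditional probabilities: $\W = C([0,\infty );\RZ )$ and the images of the coordinate projections $\w \mapsto \wm $, $\w \mapsto \wms $ are Polish, so the conditional law $\Pwm = \pP (\Xm \in \cdot \mid \Xms = \wms )$ of \eqref{:12q} is a genuine regular conditional probability, the map $\w \mapsto \Pwm (C)$ is $\mathscr{B}(\Rms )$-measurable, and integrating it against $\1 = \pP \circ (\Xms )^{-1}$ recovers the marginal $\Pm = \pP \circ (\Xm )^{-1}$. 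Following the paper's convention, $\Pwm $ depends on $\w $ only through $\wms $, so the right-hand side is meaningful.

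Next, given $\mathbf{A},\mathbf{B} \in \mathscr{B}(\Rm )$ satisfying \eqref{:13a}, I would note that the set $C = \{\,\gamma ;\ \gamma _0 \in \mathbf{A},\ \gamma _t \in \mathbf{B}\,\}$ is Borel in $C([0,\infty );\Rm )$, since the evaluation maps at times $0$ and $t$ are continuous, and apply the disintegration identity to this $C$:
\[
\int \Pwm (\wm _0 \in \mathbf{A},\ \wm _t \in \mathbf{B})\,\1 (d\w ) \;=\; \Pm (\wm _0 \in \mathbf{A},\ \wm _t \in \mathbf{B}) \;=\; 0 .
\]
Since the integrand is nonnegative, $\Pwm (\wm _0 \in \mathbf{A},\ \wm _t \in \mathbf{B}) = 0$ for $\1 $-a.s.\,$\w $, which is precisely hypothesis \eqref{:12a} of \tref{l:12}.

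Finally, \tref{l:12} gives that either $\Pwm (\wm _0 \in \mathbf{A}) = 0$ for $\1 $-a.s.\,$\w $, or $\Pwm (\wm _t \in \mathbf{B}) = 0$ for $\1 $-a.s.\,$\w $. Integrating the first alternative against $\1 $ through the disintegration identity yields $\Pm (\wm _0 \in \mathbf{A}) = 0$, and integrating the second yields $\Pm (\wm _t \in \mathbf{B}) = 0$. In either case the dichotomy claimed in \tref{l:13} holds; note that no disjointness of $\mathbf{A}$ and $\mathbf{B}$ is needed.

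Granting \tref{l:12}, this is essentially a routine averaging argument, so I do not expect a real obstacle here: the only points calling for (standard) care are the legitimacy of the disintegration identity and the $\mathscr{B}(\Rms )$-measurability of $\w \mapsto \Pwm (\cdot )$, both of which hold because every path space in sight is Polish. The genuine difficulty of the whole scheme — strict positivity of the transition density of the tail-conditioned finite diffusion $\Xm $ under $\{\0\}$, and control of the conditional limit $r\to\infty $ in the logarithmic drift — sits entirely in the proof of \tref{l:12}, which \tref{l:13} merely inherits.
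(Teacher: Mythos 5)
Your proposal is correct and follows the same route as the paper: the paper's proof likewise deduces \eqref{:12a} from \eqref{:13a} via Fubini's theorem (your disintegration identity), applies \tref{l:12}, and integrates the resulting alternative back against $\1$ using \eqref{:12q}. Your write-up merely makes the measurability and regular-conditional-probability points explicit, which the paper leaves implicit.
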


Let $ \map{\Phi }{\Rd }{\R \cup \{ \infty \} }$ and 
$ \map{\Psi }{\Rd \times \Rd }{\R \cup \{ \infty \} }$ 
be measurable functions. A stochastic process given by a solution $ \X = (X^i)_{i}$ of the ISDE 
\begin{align*}&
X_t^i - X_0^i = B_t^i + \half \int_0^t \nabla \Phi (X_u^i) du 
+ \half \int_0^t \sum_{j\ne \i} \nabla \Psi (X_u^i,X_u^j) du
\end{align*}
is called an interacting Brownian motion (in infinite dimensions) with potential 
$ (\Phi , \Psi )$. Here, $ (\nabla \Psi )(x,y)= \nabla_x \Psi (x,y)$. 
The study of interacting Brownian motions was initiated by Lang \cite{lang.1,lang.2}, who 
solved the ISDE for $ (0, \Psi )$ with $ \Psi \in C_0^3(\Rd)$ such that 
$ \Psi $ is of Ruelle's class in the sense that it is super stable and regular. 
Fritz \cite{Fr} constructed non-equilibrium solutions for the same potentials as in \cite{lang.1,lang.2} with a further restriction that the dimension $ d \le 4 $. 
Tanemura solved the ISDE for the hard-core potential \cite{T2}, while 
Fradon--Roelly--Tanemura solved the ISDE for the hard-core potential with long range interactions, but still of Ruelle's class \cite{frt.00}. Various ISDEs with logarithmic interaction potentials have also been solved \cite{h-o.bes,k-o-t.udf,o.isde,o.rm2,o-t.tail,o-t.airy,tsai.14}. 

There are fewer results for the irreducibility and ergodicity of solutions of interacting Brownian motions. Albeverio--Ma--R\"{o}ckner \cite{akr.98} proved the equivalence of the ergodicity of Dirichlet forms and the extremal property of the associated (grand canonical or canonical) Gibbs measures with potentials of Ruelle's class \cite{ruelle.2}. 
Corwin-Sun \cite{corwin-sun.14} proved the ergodicity of the Airy line ensembles, for which the dynamics are related to the Airy$ _2$ random point field. 
A general result concerning the ergodicity of Dirichlet forms can be found in \cite{fot.2}. 

The remainder of this paper is organized as follows. 
In \sref{s:2}, we recall the concept of the $ \m $-labeled process and the Lyons--Zheng decomposition for interacting Brownian motions. 
In \sref{s:3}, we prove \tref{l:12} and \tref{l:13}. 
Finally, in \sref{s:5}, we prove \tref{l:11}.

\section{The $ \m $-labeled process and the Lyons--Zheng decomposition} \label{s:2}
We introduce the $ \m $-labeled process $ \X ^{[\m ]} = (\Xm , \XXms )$, where 
\begin{align*}&
 \XXms _t = \sum_{|j| \ge \m } \delta_{X_t^j} 
.\end{align*}
The process $ \X ^{[\m ]}$ is given by the Dirichlet form 
$ (\mathcal{E}^{[\m ]} , \mathcal{D}^{[\m ]} )$ 
on $ L^2 (\Rm \times \sSS , \mum )$ such that 
\begin{align}&\notag %\label{:20o}&
\mathcal{E}^{[\m ]}(f,g) = \int _{\Rm \times \sSS } 
\mathbb{D}^{[\m ]} [f,g] \, d\mum 
.\end{align}
Here, $ \mathbb{D}^{[\m ]}$ is the standard carr\'{e} du champ on $ \Rm \times \sSS $ \cite{o-t.tail}, 
$ \mathcal{D}^{[\m ]}$ is the closure of 
\begin{align*}&
\{ f_1 \otimes f_2 \, \in C_0^{\infty} \otimes \di ;\, 
\mathcal{E}_1^{[\m ]}( f_1 \otimes f_2 , f_1 \otimes f_2 ) < \infty \} 
,\end{align*}
and $ \mum $ is the $ \m $-reduced Campbell measure such that 
\begin{align}&\notag %\label{:20q}&
\mum ( A \times B )= \int _{ A } \rho ^{ \m } (\xm ) \mu _{\xm } (B ) d\xm 
.\end{align}
Moreover, $ \rho ^{ \m }$ is the $ \m $-point correlation function of $ \mu $, and $ \mu _{\xm }$ 
is a reduced Palm measure conditioned at $ \xm = (x^i ) _{ \im }$ given by 
\begin{align}\label{:20r}&
\mu _{\xm } = 
 \mu (\,\cdot \, - \sum _{ \im } \delta_{x^i }| \lab ^i (\sss ) = x^i ,\,\text{ for all } \im ) 
.\end{align}
The standard definition of the reduced Palm measure $ \mu _{\xm }$ is 
\begin{align}&\notag %\label{:20s}&
\mu _{\xm } = \mu (\,\cdot \, - \sum _{ \im } \delta_{x^i }| \sss (\{ x^i \} ) \ge 1 
\text{ for all } \im ) 
.\end{align}
Because the state space of process $ \X $ is $ \RZ $, we take $ \mu _{\xm } $ given by \eqref{:20r}. We set 
\begin{align}&\notag %\label{:20u}&
 \pP _{\x , \sss }^{[\m ]} = \pP ( \X ^{[\m ]} \in \cdot | \X _0^{[\m ]} = (\x , \sss ) )
.\end{align}

From \cite{o.tp,o-t.tail}, we have that $ \{ \pP _{\x , \sss }^{[ \m ]} \} $ is a diffusion 
associated with the Dirichlet form 
$ (\mathcal{E}^{[\m ]} , \mathcal{D}^{[\m ]} )$ on $ L^2 (\Rm \times \sSS , \mum )$. 
By construction, $ \{ \pP _{\x , \sss }^{[ \m ]} \} $ is $ \mum $-symmetric and 
$ \mum $ is an invariant measure of $ \{ \pP _{\x , \sss }^{[ \m ]} \} $. 
One of the most critical properties of $ \{ \pP _{\x , \sss }^{[ \m ]} \} $ is its consistency. 
To explain the consistency, we prepare some notations. 

Let $ \lpath $ be the path label introduced in \sref{s:1}. We write $ \lpath =(\lpath ^i)_{i \in \Z }$. 
We set $ \lpathm $ from $ \lpath $ as follows: 
\begin{align*}&
\lpathm (\mathfrak{w})_t = 
((\lpath ^{i}(\mathfrak{w})_t )_{|i|<\m }, \sum_{|j| \ge \m } \delta_{\lpath ^j (\mathfrak{w})_t})
.\end{align*}
For an $ \R ^{[\m ]}$-valued path $ \w ^{[\m ]} $ such that 
$ \w _t^{[\m ]} = ((w_t^i)_{|i|<\m }, \sum_{|j| \ge \m } \delta_{w_t^j}) $, we set 
\begin{align*}&
\upathm (\w ^{[\m ]} )_t = \sum_{ i \in \Z } \delta_{w_t^i}
.\end{align*}
Clearly, $ \upathm (\w ^{[\m ]} )_t = \ulab (\w _t) $, where $ \ulab $ is the unlabeling map defined in \sref{s:1}. Additionally, $ \ulab (\x , \sss ) = \sum_i \delta_{x^i} + \sss $ for $ \x = (x^i)$. 

We have the following consistency. 
\begin{lemma} \label{l:21} For each $ \m \in \{ 0 \} \cup \N $, 
\begin{align} \notag % \label{:21a}
 \pP _{\x , \sss }^{[\m ]}& \circ (\upathm )^{-1}= \pP _{\ulab (\x , \sss ) }^{[0]}
,\\ \notag % \label{:21b}
 \pP _{\sss }^{[0]} &\circ (\lpathm )^{-1}= \pP _{\labm (\sss ) }^{[\m ]}
.\end{align}
\end{lemma}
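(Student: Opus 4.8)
The plan is to deduce both identities from a single fact: the $\m$-labeled diffusion is, up to relabeling, the same process as the unlabeled diffusion, the correspondence being implemented at the level of Dirichlet spaces by the unlabeling map $\ulab$. I would first fix the pathwise framework. By \eqref{:11u}, for q.e.\ starting configuration the unlabeled diffusion $\mathfrak{X}$ stays in $\Ssi$ for all $t$, so its points remain distinct and infinite in number at every time; hence the path-label map $\lpath$ of \cite{o-t.tail}, and with it $\lpathm$ and $\upathm$, are well defined and continuous off a set of capacity zero, $\upathm\circ\lpathm$ and $\lpathm\circ\upathm$ reduce to the identity on the two path spaces q.e.\ (using $\ulab\circ\labm=\mathrm{id}$ and that, with no collisions, the $2\m-1$ tagged coordinates keep their rank in the ordering), and $\labm(\sss)$ is the time-$0$ value of $\lpathm$ on the constant path at $\sss$. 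Consequently it is enough to prove the first identity; the second follows by composing with $\lpathm$. The case $\m=0$ is trivial, all maps being the identity.

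The core step is to show $\ulab$ induces an isomorphism of the two Dirichlet spaces. By the refined Campbell (Mecke--Palm) identity, $\mum\circ\ulab^{-1}=\mu$; hence $Uf:=f\circ\ulab$ is unitary from $\Lm$ onto $L^2(\Rm\times\sSS,\mum)$, with inverse $g\mapsto g\circ\labm$. Next, the carr\'{e} du champ is invariant under relabeling, $\mathbb D^{[\m]}[f\circ\ulab,g\circ\ulab]=\mathbb D[f,g]\circ\ulab$ for local smooth $f,g$, because the gradient structure on $\Rm\times\sSS$ is precisely that of $\sSS$ with $2\m-1$ points named explicitly; hence $\mathcal E^{[\m]}(Uf,Ug)=\mathcal E(f,g)$ and $U$ carries a core of $(\mathcal E,\dom)$ onto a core of $(\mathcal E^{[\m]},\mathcal D^{[\m]})$. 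So $U$ is an isomorphism of these quasi-regular Dirichlet forms and intertwines their $L^2$-semigroups and resolvents. Transferring the semigroup identity to the associated diffusions, the image of $\pP_{\x,\sss}^{[\m]}$ under the pathwise map $\upathm$ has, for q.e.\ $(\x,\sss)$, the same finite-dimensional distributions as $\pP_{\ulab(\x,\sss)}^{[0]}$; since both processes are continuous and $\upathm$ is continuous q.e., the two path laws coincide, which is the first identity. This argument is essentially the one of \cite{o.tp,o-t.tail}; the point of \lref{l:21} is to record it in the present notation.

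The hard part will be the support, domain, and exceptional-set bookkeeping compressed into the two displayed facts above. First, $\ulab$ is only \emph{essentially} injective, so one must use that the reduced Palm measure $\mu_{\xm}$ in \eqref{:20r} is the $\lab$-conditioned one; this is exactly what makes $(\x,\sss)$ recoverable $\mum$-a.s.\ from $\ulab(\x,\sss)$ and what makes $\mum\circ\ulab^{-1}=\mu$ hold with the correct normalization. Second, the core correspondence is not literal — $C_0^\infty\otimes\di$ tensor functions do not pull back to local smooth functions under $\ulab$, and conversely — so one needs a density argument showing $\{f\circ\ulab:f\in\di\}$ lies in and is $\mathcal E_1^{[\m]}$-dense in $\mathcal D^{[\m]}$. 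Third, upgrading the equality of $L^2$-semigroups to an equality of the \emph{regular conditional} path laws $\pP_{\x,\sss}^{[\m]}$ and $\pP_{\ulab(\x,\sss)}^{[0]}$, and the mutual invertibility of $\lpathm$ and $\upathm$ along q.e.\ paths, both rest on \eqref{:11u} and the resulting collision-free, continuous behaviour of the tagged trajectories; this is the genuine analytic input.
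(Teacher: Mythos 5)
The paper's own ``proof'' of \lref{l:21} is a one-line citation of Theorem 2.4 in \cite{o.tp}; you are in effect re-proving that theorem, and the load-bearing step of your argument is wrong. The claims that $\mum \circ \ulab^{-1} = \mu$ and that $Uf := f\circ\ulab$ is unitary from $\Lm$ onto $L^2(\Rm\times\sSS,\mum)$ are both false. The reduced Campbell measure $\mum$ is an infinite ($\sigma$-finite) measure: its total mass is $\int_{\Rm}\rho^{\m}(\xm)\,d\xm = \infty$, since for the translation-invariant sine$_2$ field the one-point correlation $\rho^1$ is a positive constant. Hence already $U1 = 1 \notin L^2(\Rm\times\sSS,\mum)$ while $1\in\Lm$, so $U$ does not even map $\Lm$ into $L^2(\mum)$, let alone unitarily. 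At the level of measures, with the $\lab$-conditioned Palm measure \eqref{:20r} the pushforward $\mum\circ\ulab^{-1}$ is absolutely continuous with respect to $\mu$, but its density is the ratio of $\rho^{\m}$ to the density of the law of the middle $2\m-1$ labels of $\sss$ under $\mu$, evaluated at those labels; these two densities are genuinely different objects (for $\m=1$, $\rho^1$ is constant on all of $\R$, whereas $\lab^0(\sss)\ge 0$ $\mu$-a.s., so the law of $\lab^0$ is supported on $[0,\infty)$ and decays). Consequently $\mathcal{E}^{[\m]}(Uf,Ug) = \int \mathbb{D}[f,g]\,d(\mum\circ\ulab^{-1}) \neq \mathcal{E}(f,g)$ in general, and the asserted isomorphism of quasi-regular Dirichlet spaces, from which you derive the semigroup identity and hence both displayed equalities, collapses.

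What is true, and what the lemma actually encodes, is the pointwise intertwining $p_t^{[\m]}(f\circ\ulab) = (p_t f)\circ\ulab$ for quasi-every starting point, i.e.\ a Dynkin-type statement that the deterministic image $\ulab(\X_t^{[\m]})$ of the $\m$-labeled diffusion is again Markov with the unlabeled transition kernel. This cannot be deduced from a unitary equivalence, because no such equivalence exists between a probability-reference-measure form and an infinite-reference-measure form; it is precisely the content of Theorem 2.4 in \cite{o.tp}, whose proof works with the $\sigma$-finite Campbell measure directly (local test functions, the product structure of $\mathbb{D}^{[\m]}$, and the label-path construction) rather than through an isometry of the two $L^2$ spaces. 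Your surrounding bookkeeping is reasonable — the q.e.\ well-definedness of $\upathm$ and $\lpathm$ via \eqref{:11u}, the recoverability of $(\x,\sss)$ from $\ulab(\x,\sss)$ on the support of $\mum$ thanks to \eqref{:20r}, and the reduction of the second identity to the first via $\lpathm\circ\upathm=\mathrm{id}$ along non-colliding paths — but the central mechanism you propose for the first identity does not survive, so the proof as written has a genuine gap exactly where the cited theorem does the work.
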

\begin{proof}
Applying Theorem 2.4 in \cite{o.tp} to Dyson's model, we obtain \lref{l:21}. 
\qed\end{proof}

Let $ \ulabm = ( \xm , \mathfrak{x}^{m*} ) $ for $ \x =(x^i)_{i\in \Z }\in \RZ $, 
where $ \mathfrak{x}^{m*} = \sum_{|i|\ge \m } \delta_{x^i}$. 
\begin{lemma} \label{l:22} For each $ \m \in \{ 0 \} \cup \N $
\begin{align} \notag &% \label{:22a}&
\pPi _{\x } \circ (\upathm )^{-1} = \pP _{\ulabm }^{[\m ]}
.\end{align}
\end{lemma}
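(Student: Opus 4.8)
The plan is to obtain \lref{l:22} from the consistency \lref{l:21} by routing everything through the unlabeled diffusion $ \XX = \sum_{i\in\Z}\delta_{X^i} $, using that $ \X = \lpath (\XX )$, so that $ \w _0 = \X _0 = \lab (\XX _0 )$ and, since $ \XX _0 \in \Ssi $ $ \pP $-a.s.\ by \eqref{:11u}, also $ \XX _0 = \ulab (\X _0 )$.

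First I would identify $ \pPi _{\x } $ explicitly. By \eqref{:11q}--\eqref{:11r}, $ \pP (\X _0\in\cdot ) = \mui = \mul $; applying $ \ulab $ and using $ \ulab \circ \lab = \mathrm{id.}$ gives $ \pP (\XX _0\in\cdot ) = \mul \circ \ulab ^{-1} = \mu $. By \eqref{:11u}, $ \mu $ is carried by $ \Ssi $, on which $ \lab $ is injective with left inverse $ \ulab $; hence for $ \mui $-a.s.\,$ \x $ we have $ \ulab (\x )\in \Ssi $, $ \x = \lab (\ulab (\x ))$, and the event $ \{ \w _0 = \x \} $ equals $ \{ \XX _0 = \ulab (\x ) \} $. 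Since $ \XX $ is the $ \mu $-reversible diffusion of $ (\E ,\dom )$ --- i.e.\ a Markov family $ \{ \pP _{\sss }^{[0]} \} $ with initial distribution $ \mu $ --- the regular conditional probability of $ \pP \circ \XX ^{-1} $ given $ \XX _0 $ is $ \pP _{\XX _0 }^{[0]} $ for $ \mu $-a.s.\ starting configuration, and transporting by $ \lpath $ yields, for $ \mui $-a.s.\,$ \x $,
\begin{align*}&
\pPi _{\x } = \pP _{\ulab (\x )}^{[0]}\circ \lpath ^{-1}
.\end{align*}
This is precisely \lref{l:22} for $ \m = 0 $.

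For general $ \m $, observe that the $ \m $-reduction of paths $ \upathm $ (which carries a labeled path $ \w $ to $ t\mapsto ((w_t^i)_{\im },\sum_{\jmm }\delta_{w_t^j}) $) satisfies $ \upathm \circ \lpath = \lpathm $ by the very definition of $ \lpathm $ in \sref{s:2}. Composing the displayed equality with $ \upathm $ therefore gives
\begin{align*}&
\pPi _{\x }\circ (\upathm )^{-1} = \pP _{\ulab (\x )}^{[0]}\circ (\lpathm )^{-1}
,\end{align*}
and the second identity of \lref{l:21}, applied with $ \sss = \ulab (\x )$, rewrites the right-hand side as $ \pP _{\labm (\ulab (\x ))}^{[\m ]} $. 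Finally, for $ \mui $-a.s.\,$ \x $ we use $ \ulab (\x )\in \Ssi $ and $ \x = \lab (\ulab (\x ))$ to get $ \labm (\ulab (\x )) = ((x^i)_{\im },\sum_{\jmm }\delta_{x^j}) = \ulabm $; combining the two identities yields $ \pPi _{\x }\circ (\upathm )^{-1} = \pP _{\ulabm }^{[\m ]} $, as claimed.

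The one genuinely delicate point is the first step --- matching the fixed version of $ \x \mapsto \pPi _{\x } $ from \eqref{:11v} with $ \pP _{\ulab (\x )}^{[0]}\circ \lpath ^{-1} $ off a $ \mui $-null set of $ \x $ --- which rests on $ \{ \pP _{\sss }^{[0]} \} $ being a bona fide (quasi-regular) Markov family whose time-$ 0 $ marginal under the stationary dynamics is exactly $ \mu $, combined with \eqref{:11u} to keep the whole argument inside $ \Ssi $. Everything after that is bookkeeping with the maps $ \lab ,\ulab ,\lpath ,\lpathm ,\upathm $ and the already-established \lref{l:21}.
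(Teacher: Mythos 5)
Your argument is correct and is essentially the paper's own proof: the authors dispose of \lref{l:22} in one line, saying it "follows from \eqref{:11v} and \lref{l:21}", and your write-up is precisely the careful unwinding of that claim (identifying $ \pPi _{\x }$ with $ \pP _{\ulab (\x )}^{[0]}\circ \lpath ^{-1}$ via $ \X = \lpath (\XX )$ and \eqref{:11u}, then applying the second consistency identity of \lref{l:21}). Your attention to the $ \mui $-a.s.\ qualifiers and to $ \x = \lab (\ulab (\x ))$ is appropriate, since the regular conditional probability in \eqref{:11v} is only determined up to $ \mui $-null sets.
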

\begin{proof}
\lref{l:22} follows from \eqref{:11v} and \lref{l:21}. 
\qed\end{proof}
Note that $ w^j $ under $ \pP ^{[ \m ]} $ is a solution to SDE \eqref{:10a} for $ |j| < \m $. 
Thus, the martingale term of the Fukushima decomposition of $ w^j $ describes Brownian motion. 
Hence, applying the Lyons--Zheng decomposition to $ w^j $ under $ \pP ^{[ \m ]} $, we obtain 
\begin{align} \label{:23b}& 
 w _t^j - w _u^j = \half\{ B _t^j - B _u^j + \hat{B} _t^j - \hat{B} _u^j \} 
 \quad \text{ for } 0 \le t,u \le T 
,\end{align}
where $ \hat{B}_t^j = B_{T-t}^j $. 
Because $ \pP ^{[ \m ]} $ is a symmetric diffusion, $ \hat{B}_t^j $ describes Brownian motion. 
Furthermore, $ \{B_t^j \}_{|j| < \m }$ is a sequence of independent Brownian motions 
under $ \pP ^{[ \m ]} $. 
Because $ \hat{B} _t^j $ is a time reversal of $ B _t^j $, 
$ \{ \hat{B}_t^j \}_{|j| < \m } $ is a sequence of independent Brownian motions under $ \pP ^{[ \m ]} $. 
We refer to Section 9 in \cite{k-o-t.ifc} for the proof of the Lyons--Zheng decomposition of this form. 
Because of the consistency in \lref{l:22}, we have that \eqref{:23b} holds for all $ j \in \Z $ 
under $ \pPi $. 
Furthermore, $ \{B_{t}^j \}_{j \in \Z }$ and $ \{\hat{B}_{t}^j \}_{j \in \Z }$ are sequences of independent Brownian motions under $ \pPi $. 
Thus, 
$ \{B _t^j - B _u^j \}_{j\in\Z }$ and $ \{\hat{B} _t^j - \hat{B} _u^j \}_{j\in\Z }$ are sequences of increments of independent Brownian motions. 
Collecting these statements together, we obtain the following. 
\begin{lemma} \label{l:23} 
\thetag{1} For each $ j \in \Z $, we have \eqref{:23b} for $ \pPi $-a.s. 
\\
\thetag{2} $ \{B _t^j - B _u^j \}_{j\in\Z }$ and $ \{\hat{B} _t^j - \hat{B} _u^j \}_{j\in\Z }$ 
are sequences of increments of independent Brownian motions under $ \pPi $. 
\end{lemma}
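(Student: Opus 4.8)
The plan is to prove \eqref{:23b} and the independence assertion first for each fixed $ \m $, at the level of the $ \m $-labeled process $ \XM = (\Xm ,\XXms ) $, and then to propagate both statements to $ \pPi $ by the consistency in \lref{l:22}. Fixing $ \m $ and working under the law of $ \XM $ (equivalently, under $ \pP ^{[\m ]} $): since the coordinate $ w^j $ with $ |j| < \m $ solves the ISDE \eqref{:10a}, it admits a Fukushima decomposition $ w_t^j - w_0^j = M_t^j + A_t^j $ with $ M^j $ the martingale additive functional attached to the $ j $-th coordinate function and $ A^j $ of zero energy; because the Dirichlet form uses the standard carré du champ $ \mathbb{D}^{[\m ]} $ and the diffusion coefficient of Dyson's model is the identity, one gets $ \langle M^j\rangle_t = t $ and $ \langle M^j , M^k\rangle_t = 0 $ for $ j\ne k $, so $ B^j := M^j $, $ |j| < \m $, is a finite family of mutually independent one-dimensional Brownian motions. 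Moreover, here $ A^j $ is the integrated logarithmic drift, hence of bounded variation.

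Since $ \XM $ is $ \mum $-symmetric and $ \mum $ is invariant, the time reversal of $ \XM $ on $ [0,T] $ is a diffusion of the same type; combining the forward and backward Fukushima decompositions of $ w^j $, the zero-energy (drift) parts cancel and one is left with \eqref{:23b}, with $ \hat{B}_t^j = B_{T-t}^j $. For the Lyons--Zheng decomposition in precisely this form I would invoke Section~9 of \cite{k-o-t.ifc}. The family $ \{\hat{B}^j\}_{|j| < \m } $ consists of independent Brownian motions as well, being the time reversal of the independent family $ \{B^j\}_{|j| < \m } $.

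To pass to $ \pPi $, I would use \lref{l:22}: for every $ \m $ the $ \m $-labeled process $ \XM $ under $ \pPix $ — and hence, after integrating in $ \x $, under $ \pPi $ — has the law associated with the Dirichlet form $ (\mathcal{E}^{[\m ]},\mathcal{D}^{[\m ]}) $, i.e. the law $ \pP ^{[\m ]} $. Therefore \eqref{:23b} and the independence established above hold under $ \pPi $ for all $ |j| < \m $. As $ \m $ is arbitrary and every finite subset of $ \Z $ is contained in $ \{|j| < \m \} $ once $ \m $ is large, \eqref{:23b} holds for all $ j\in\Z $, $ \pPi $-a.s., and $ \{B^j\}_{j\in\Z } $, $ \{\hat{B}^j\}_{j\in\Z } $ are each families of independent Brownian motions under $ \pPi $; the increment statements in \thetag{2} follow at once.

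The step I expect to be the main obstacle is the fixed-$ \m $ identification of the martingale part of the Fukushima decomposition of $ \Xm $ as a standard Brownian motion with identity covariation, together with the verification that the time-reversed process is again a diffusion governed by the same carré du champ, so that the forward and backward drift terms are genuine additive functionals that cancel upon averaging. This rests on the $ \mum $-symmetry and on the special structure of Dyson's model — the diffusion coefficient is the identity and only the drift carries the logarithmic interaction. Granting the Lyons--Zheng decomposition of \cite{k-o-t.ifc}, the remaining steps (the covariation computation and the limit $ \m \to\infty $ via \lref{l:22}) are routine.
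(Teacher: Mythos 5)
Your proposal is correct and follows essentially the same route as the paper: identify the martingale part of the Fukushima decomposition of $ w^j $ under $ \pP ^{[\m ]} $ as a Brownian motion, apply the Lyons--Zheng decomposition in the form of Section~9 of \cite{k-o-t.ifc} using the $ \mum $-symmetry to handle the time-reversed part, and then transfer the statement to $ \pPi $ for all $ j \in \Z $ via the consistency of \lref{l:22}. The extra detail you supply on the covariation computation $ \langle M^j , M^k\rangle_t = \delta_{jk}\, t $ is a harmless elaboration of what the paper asserts directly from the fact that $ w^j $ solves the SDE.
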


\section{Proof of \tref{l:12} and \tref{l:13}} \label{s:3}
Let $ \dmu $ be the logarithmic derivative of $ \mu $. 
By definition, $ \dmu $ is a function defined on 
$ \mathbb{R}\times \sSS $ such that $ \dmu \in L_{\mathrm{loc}}^1 (\muone )$ and 
\begin{align} &\notag % \label{:30a}&
 \int_{\mathbb{R} \times \sSS } \dmu ( s , \sss ) \varphi ( s , \sss ) d \muone 
=
- \int_{\mathbb{R} \times \sSS } \nabla \varphi ( s , \sss ) d \muone 
\end{align}
for all $ \varphi \in C_0^{\infty}(\mathbb{R}) \otimes \di ^b $, 
where $ \di ^b $ is the set consisting of bounded, local, and smooth functions on $ \sSS $ \cite{o-t.tail}. We write $ \sss = \sum_i \delta_{s^i}$. 
In \cite{o.isde}, it is proved that $ \mu $ has a logarithmic derivative such that 
\begin{align}\label{:30b}
\dmu (s , \sss ) & = 2 \limi{\rR } \sum_{{s^i}\in \SR } \frac{1}{s -{s^i}} 
\quad \text{ in }L_{\mathrm{loc}}^2 (\muone )
\\ \notag &
= 2 \limi{\rR } \sum_{ | s -{s^i}| < \rR } \frac{1}{s -{s^i}} 
\quad \text{ in }L_{\mathrm{loc}}^2 (\muone ) 
.\end{align}
The sums in \eqref{:30b} converge because $ \mu $ is translation invariant, $ d=1$, and 
the variance of $ \sss ([- \rR , \rR ])$ under $ \mu $ increases logarithmically as $ \rR \to \infty $. 
The second equality in \eqref{:30b} comes from $ d = 1 $. 

Note that the Ginibre random point field $ \mu_{\mathrm{gin}}$ satisfies the following \cite{o.isde}: 
\begin{align}\label{:30z}
\mathfrak{d}^{\mu_{\mathrm{gin}}} (s , \sss ) & = -2s + 
2 \limi{\rR } \sum_{{s^i}\in \SR } \frac{s - {s^i}}{|s - {s^i}|^2 } 
\quad \text{ in }L_{\mathrm{loc}}^2 (\mu_{\mathrm{gin}}^{[1]} )
\\ \notag &
= 2 \limi{\rR } \sum_{ | s -{s^i}| < \rR } \frac{s - {s^i}}{|s - {s^i}|^2 } 
\quad \text{ in }L_{\mathrm{loc}}^2 (\mu_{\mathrm{gin}}^{[1]} )
.\end{align}
The Ginibre random point field $ \mu_{\mathrm{gin}}$ is the counterpart of $ \mu $ in $ \R ^2 $, because $ \mu_{\mathrm{gin}}$ is rotation- and translation-invariant, and the interaction potential of $ \mu_{\mathrm{gin}}$ is the logarithmic potential with an inverse temperature of $ \beta = 2 $. %
Compare \eqref{:30b} and \eqref{:30z}. The first equalities in \eqref{:30b} and \eqref{:30z} have different expressions according to the dimension $ d $. 

Recall that $ \beta = 2$. Then, the ISDE in question is given by 
\begin{align}\label{:30c}&
 X_t^i - X_0^i = B_t^i + 
\int_0^t
 \lim_{r\to\infty }\sum_{|X_u^i - X_u^j |< r, \, j\not= i} 
 \frac{1}{X_u^i - X_u^j } du \quad (i\in\mathbb{Z}) 
.\end{align}
Using \eqref{:30b} and \eqref{:30c}, we have 
\begin{align}\label{:30e}&
 X_t^i - X_0^i = B_t^i + 
 \frac{1 }{2} \int_0^t \dmu (X_u^i , \sum_{j\ne i} \delta_{X_u^j }) du \quad (i\in\mathbb{Z})
.\end{align} 
From \eqref{:30b}, \eqref{:30c}, and \eqref{:30e}, it is easy to see that, $ i \in \mathbb{Z}$, 
\begin{align} \notag %\label{:30d}
 X_t^i - X_0^i = B_t^i & + 
\int_0^t
\sum_{ \jm ,\, j\ne i}^{\m } 
 \frac{1}{X_u^i - X_u^j } du 
 + 
 \int_0^t
 \lim_{r\to\infty }\sum_{|X_u^i - X_u^j |< r \atop \m \le | j | ,\, j\not= i } %\le \nnn } 
 \frac{1}{X_u^i - X_u^j } du 
 \\ \notag 
= B_t^i & + 
\int_0^t
\sum_{ \jm ,\, j\ne i}^{\m } 
 \frac{1}{X_u^i - X_u^j } du 
 + 
 \int_0^t
 \lim_{ \nnn \to\infty }\sum_{\m \le | j | \le \nnn ,\, j\not= i } %\le \nnn } 
 \frac{1}{X_u^i - X_u^j } du 
.\end{align}
Taking this equation into account, we set 
$ \bwm = (\bwmi )_{| i | < \m }$ such that 
\begin{align}\label{:30f}&
\bwmi ( \xm , t ) = \sum_{j \ne i \atop \jm } \frac{1}{x^i - x^j } 
+ \limi{\nnn } \sum_{ \m \le | j | \le \nnn } \frac{1}{x^i - w _t^j } 
.\end{align}

Let $ \xm = (x^{i})_{|i|<\m } $, $ \x =(x^i)_{i\in \mathbb{Z} } $, and $ \y = (y^i)_{i\in \Z } $. 
For $ \y \in \RZ $, we set 
\begin{align} & \notag % \label{:30g}&
\Rm (\y ) = \{ \xm \in \Rm \, ;\, y^{-\m }< x^{-m+1} ,\ x^{\m -1}< y^{\m }\} 
.\end{align}
Let $ \OTwm $ be a time-dependent open set in $ \Rm $ such that 
\begin{align} &\notag % \label{:30h}&
\OTwm = \{ ( \xm , t ) \in \Rm \times [0,T] \, ;\, \xm \in \Rm (\w _t ) \} 
.\end{align}
For $ ( \xm , t ) $, we set $ \xm _t = (x _t^i)_{|i| < \m }$ such that 
$ ( \xm _t , t ) = ( \xm , t ) $. 
For $ \epsilon \ge 0 $, $ \m , T \in \N $, and $ \w \in \W $, we set 
\begin{align}\label{:30i}
 \OTwme = \{ ( \xm , t ) \in \OTwm \, ;\,& | x_t^i - x_t^{i+1}| > \epsilon ,\, -\m < i < \m -1 
 \\ \notag &
 | x_t^{-\m +1} - w_t^{-\m }| > \epsilon ,\ 
 | x_t^{\m -1} - w_t^{\m }| > \epsilon 
 \} 
.\end{align}
Suppose that $ \epsilon > 0 $ and that $ \OTwme $ is nonempty and connected. 
For $ \pPi $-a.s.\,$ \w $, we find a connected open set $ \QTwme $ in $ \Rm \times [0,T]$ 
with smooth boundary such that 
\begin{align} & \label{:30j}
\OTwme \subset \QTwme \subset \OTwmee 
.\end{align}

\begin{lemma} \label{l:31} 
% Suppose that $ \OTwme $ is a nonempty and connected open set. 
% Let $ \QTwme $ be as above. Then, f
For each $ T , \m \in \N $, and $ \pPi $-a.s.\,$ \w $, the following hold. 
\\\thetag{1} 
$ \bwm ( \xm ,t)$ is H\"{o}lder continuous in $ t $ in $ \QTwme $ for each $ \xm $. 
\\\thetag{2}
$ \bwm ( \xm ,t)$ is Lipschitz continuous in $ \xm $ in $ \QTwme $. 
\end{lemma}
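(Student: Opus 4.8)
The plan is to analyze the two terms in the defining formula \eqref{:30f} for $\bwmi(\xm,t)$ separately. The first term, $\sum_{j\ne i,\,\jm}\frac{1}{x^i-x^j}$, is a finite sum of smooth functions of $\xm$ alone, with no $t$-dependence. On the closure of $\QTwme$ the denominators $x^i-x^j$ stay bounded away from $0$: by \eqref{:30i} the gaps $|x_t^i-x_t^{i+1}|$ are $>\epsilon/2$ on $\OTwmee\supset\QTwme$, hence for any pair $\mj,\,|j|<\m$ the separation $|x^i-x^j|>\epsilon/2$ as well. Therefore this term is $C^\infty$ in $\xm$ — in particular Lipschitz in $\xm$ — uniformly on $\QTwme$, and it is constant in $t$, so it is trivially H\"older in $t$.

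The substance is the second term, $g_i(\xm,t):=\limi{\nnn}\sum_{\m\le|j|\le\nnn}\frac{1}{x^i-w_t^j}$, which carries the $t$-dependence through the path $\w$ and is an infinite sum. First I would argue the sum converges locally uniformly: since $\X=\lpath(\XX)$ with $\XX$ a $\mu$-reversible diffusion and $\mu$ is supported (up to capacity) on $\bigcup_n\sSS_n$ by \eqref{:11p}, for $\pPi$-a.s.\,$\w$ the configuration $\sum_{|j|\ge\m}\delta_{w_t^j}$ lies in some $\sSS_n$ for all $t\in[0,T]$ (using path continuity and compactness of $[0,T]$ to get a single $n$), which gives the linear growth bound $n^{-1}\rR\le \#\{j:|w_t^j|\le \rR\}\le n\rR$ uniformly in $t$. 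Combined with the fact that on $\QTwme$ the point $x^i$ stays in a fixed compact interval separated from $w_t^{-\m}$ and $w_t^{\m}$ by at least $\epsilon/2$ (again by \eqref{:30i}–\eqref{:30j}), the alternating/cancellation structure that already justified convergence of \eqref{:30b} — pairing $w_t^j$ with $w_t^{-j}$, or more simply the logarithmic growth of the variance of $\sss([-\rR,\rR])$ — shows $\sum_{\m\le|j|\le\nnn}\frac{1}{x^i-w_t^j}$ is Cauchy uniformly in $(\xm,t)\in\QTwme$. Differentiating termwise in $x^i$ gives $-\sum_j\frac{1}{(x^i-w_t^j)^2}$, which converges absolutely and locally uniformly (this sum has summable tails, of order $\sum_j |w_t^j|^{-2}<\infty$ by the linear growth bound), so $g_i$ is $C^1$ in $\xm$ on $\QTwme$ with derivative bounded uniformly on $\QTwme$; hence $g_i$ is Lipschitz in $\xm$ there, proving \thetag{2}.

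For \thetag{1}, H\"older continuity in $t$: fix $\xm$. Each summand $t\mapsto \frac{1}{x^i-w_t^j}$ inherits regularity from $t\mapsto w_t^j$. Here I use \lref{l:23}\thetag{1}, the Lyons–Zheng decomposition $w_t^j-w_u^j=\half\{(B_t^j-B_u^j)+(\hat B_t^j-\hat B_u^j)\}$: the paths $t\mapsto w_t^j$ are thus, for $\pPi$-a.s.\,$\w$, locally $\gamma$-H\"older for every $\gamma<1/2$, with a H\"older seminorm that can be taken summable against $|w_\cdot^j|^{-2}$ after localizing — more carefully, one writes, for $(\xm,t),(\xm,u)\in\QTwme$,
\begin{align}\notag
g_i(\xm,t)-g_i(\xm,u)=\sum_{\m\le|j|}\frac{w_t^j-w_u^j}{(x^i-w_t^j)(x^i-w_u^j)},
\end{align}
bounds $|w_t^j-w_u^j|\le C_j|t-u|^{\gamma}$ with the local H\"older constants $C_j$, and controls $\sum_j C_j/|x^i-w_t^j|^2$ using that off a finite set the denominators are $\asymp |w_t^j|^2$ together with a crude a.s.\ bound on the growth of $\sup_{t\le T}|C_j|$ in $j$ (polynomial growth suffices against the quadratically decaying weights). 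This yields $|g_i(\xm,t)-g_i(\xm,u)|\le C_{\w,\epsilon,T}|t-u|^{\gamma}$, i.e.\ H\"older continuity in $t$ on $\QTwme$, and combined with the first paragraph this establishes both \thetag{1} and \thetag{2}.

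The main obstacle is the uniform control of the infinite tail $\sum_{|j|\ge\m}$: one must show that the random H\"older constants of the individual Brownian-type paths $w^j$ do not grow too fast in $j$ relative to the quadratic decay $|x^i-w_t^j|^{-2}\asymp |w_t^j|^{-2}$, uniformly over the time interval $[0,T]$ and over $(\xm,t)$ ranging in $\QTwme$. This is where the linear-growth estimate from membership in $\sSS_n$ (ensuring $|w_t^j|\gtrsim |j|/n$ for large $|j|$, uniformly in $t$) and a Borel–Cantelli / maximal-inequality argument for the H\"older moduli of the $B^j$ and $\hat B^j$ (which are genuine independent Brownian motions by \lref{l:23}\thetag{2}) must be combined; everything else is a routine termwise estimate.
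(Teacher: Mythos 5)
Your argument follows essentially the same route as the paper's proof. The finite part of \eqref{:30f} is disposed of in the same trivial way, and for the infinite tail you use the identical telescoping identity $\sum_{|j|\ge \m}(w_t^j-w_u^j)/\{(x^i-w_t^j)(x^i-w_u^j)\}$, the Lyons--Zheng decomposition from \lref{l:23}, and the linear-growth control $|x^i-w_t^j|\gtrsim |j|/n$ derived from \eqref{:11p} (the paper packages this as the events $A_n$ in \eqref{:31d}); part \thetag{2} is the same computation as \eqref{:32d}, whether phrased as a difference quotient or as termwise differentiation. The only difference is bookkeeping in part \thetag{1}: the paper splits each summand by a three-factor H\"older inequality, extracting $\big(\sup_t\sum_j |j|\,|x^i-w_t^j|^{-3}\big)^{2/3}$ and leaving $\big(\sum_j|B_t^j-B_u^j|^3/|j|^2\big)^{1/3}$, whereas you first bound $|w_t^j-w_u^j|\le C_j|t-u|^{\gamma}$ and then sum $C_j/|j|^2$; these are equivalent in substance.

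One correction is needed: your parenthetical claim that ``polynomial growth [of the H\"older constants $C_j$] suffices against the quadratically decaying weights'' is false as stated, since $\sum_j C_j/|j|^2<\infty$ requires $C_j$ to grow strictly sublinearly in $|j|$, and e.g.\ $C_j\asymp|j|^2$ would make the sum diverge. The step is nevertheless salvageable exactly by the Borel--Cantelli/Gaussian-tail argument you sketch at the end: the $\gamma$-H\"older seminorms on $[0,T]$ of the independent Brownian motions $B^j$, $\hat B^j$ of \lref{l:23}\thetag{2} have uniformly Gaussian tails, so $C_j=O(\sqrt{\log|j|})$ almost surely, which is far more than enough against the weights $|j|^{-2}$. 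Replace the ``polynomial growth'' claim by that bound and your proof is complete.
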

\begin{proof} 
Let $ ( \xm ,t) , (\xm ,u) \in \QTwme $ and fix $ i $ such that $ |i| < m $. 
Then from \eqref{:30f} 
\begin{align}\label{:31a}
\bwmi ( \xm ,t) - \bwmi (\xm ,u) &= 
\sumjm \frac{1}{ x ^i - w _t^j} - \sumjm \frac{1}{ x ^i - w _u^j}
\\ \notag & = 
\sumjm \frac{ w _t^j - w _u^j }{ \4 }
.\end{align}
From \lref{l:23}, we can deduce for $ \pPi $-a.s. that 
\begin{align} \label{:31b}& 
 w _t^j - w _u^j = \half\{ B _t^j - B _u^j + \hat{B} _t^j - \hat{B} _u^j \} 
,\end{align}
where $ \{B _t^j - B _u^j \}_{j\in\Z }$ and $ \{\hat{B} _t^j - \hat{B} _u^j \}_{j\in\Z }$ 
are sequences of increments of independent Brownian motions under $ \pPi $. 
From \eqref{:31a} and \eqref{:31b}, we have that 
\begin{align}\label{:31c} 
\bwmi ( \xm , t ) - & \bwmi ( \xm , u ) = \half
\sumjm \frac{ B _t^j - B _u^j + \hat{B} _t^j - \hat{B} _u^j } { \4 } 
\\ \notag &
= \half
\sumjm \frac{ B _t^j - B _u^j } { \4 } 
+ 
 \half
\sumjm \frac{ \hat{B} _t^j - \hat{B} _u^j } { \4 } 
.\end{align}
Let $ \W $ be as in \eqref{:11pp}. 
To control the denominator in \eqref{:31c}, we set 
\begin{align} & \label{:31d}&
A_n = 
\Big\{ \w \in \W \, ;\, \big\{ \min_{t\in [a,b]}
 | x ^i - w _t^j | \big\} \ge \frac{| j |}{n} \text{ for all }|j|\ge \m \Big\} 
.\end{align}
Using \eqref{:31d}, we deduce that, for $ \pPi $-a.s.\,$ \w \in A_n $, 
\begin{align}\label{:31e}
 \sup_{t\in [a,b]}\Big\{ \sumjm \frac{| j |} { | x ^i - w _t^j | ^3 } \Big\} & \le 
\Big\{ \sumjm \frac{| j |}{ \min_{t\in [a,b]} | x ^i - w _t^j | ^3 } \Big\}
\\ \notag & \le 
 \Big\{ \sumjm \frac{| j |}{ (\frac{| j |}{n})^3 } \Big\}
 \quad \text{ by }\eqref{:31d} 
\\ \notag &
= n ^3 \Big\{ \sumjm \frac{ 1 } { | j | ^2} \Big\} < \infty 
.\end{align}
We set 
$ Q (\xm ) = 
\{ \w \in \W \, ;\, \OTwmee \cap ( \{ \xm \}\times [0,T] ) \neq \emptyset \} 
$. 
Then using \eqref{:11p}, \eqref{:30i}, and \eqref{:30j}, we deduce 
\begin{align}\label{:31E}&
 \pPi ( \{\bigcup_{ n \in \N } A_n \}^c \,;\, Q (\xm ) ) = 0 
.\end{align}
Hence, from \eqref{:31e} and \eqref{:31E}, we obtain, for 
$ \pPi $-a.s.\,$ \w \in Q (\xm ) $, 
\begin{align}\label{:31g}&
\cref{;31h}(\w ):= 
 \sup_{t\in [a,b]}\Big\{ \sumjm \frac{| j |} { | x ^i - w _t^j | ^3 } \Big\} < \infty 
.\end{align}
Using Young's inequality and \eqref{:31g}, we have 
\begin{align} \label{:31h}&
\sumjm \Big|\frac{ B _t^j - B _u^j } { \4 } \Big|
\\ \notag 
 \le &
\Big( \sup_{t\in [a,b]}
 \sumjm \frac{| j |} { | x ^i - w _t^j | ^3 } \Big) ^{1/3}
\Big( \sup_{u\in [a,b]}
\sumjm \frac{| j |} { | x ^i - w _u^j | ^3 } \Big) ^{1/3}
\Big( \sumjm \frac{| B _t^j - B _u^j |^3} { | j | ^2 } \Big) ^{1/3}
\\ \notag 
= &\cref{;31h}(\w ) ^{2/3}\Big( \sumjm \frac{| B _t^j - B _u^j |^3} { | j | ^2 } \Big) ^{1/3}
.\end{align}
Similarly, for $ \pPi $-a.s.\,$ \w \in Q (\xm ) $, we have that 
\begin{align}\label{:31i}&
\sumjm \Big|\frac{ \hat{B} _t^j - \hat{B} _u^j } { \4 } \Big| 
\le \cref{;31h}(\w ) ^{2/3} 
 \Big( \sumjm \frac{| \hat{B} _t^j - \hat{B} _u^j |^3} { | j | ^2 } \Big) ^{1/3}
.\end{align}
Recall that $ \Ct \label{;31h}(\w ) < \infty $ for $ \pPi $-a.s.\,$ \w \in Q (\xm ) $ from \eqref{:31g}. 
Note that $ \{B_t^j \}_{ j \in \Z } $ and $ \{ \hat{B} _t^j \}_{ j \in \Z } $ 
are sequences of independent Brownian motions. 
Then, we obtain \lref{l:31} \thetag{1} from \eqref{:31c}, \eqref{:31h}, and \eqref{:31i}. 

Let $ ( \xm ,t) $ and $ (\ym ,t) \in \QTwme $. 
From \eqref{:30f}, we have that 
\begin{align} \label{:32d} & 
\bwmi ( \xm ,t) - \bwmi ( \ym ,t) 
\\ \notag =&
\sum_{j \ne i \atop \jm } \frac{1}{x^i - x^j } - \sum_{j \ne i \atop \jm } \frac{1}{y^i - y^j } 
+ \sumjm \frac{1}{ x^i - w _t^j} - \sumjm \frac{1}{ y^i - w _t^j}
\\ \notag = & 
\sum_{j \ne i \atop \jm } \frac{1}{x^i - x^j } - 
\sum_{j \ne i \atop \jm } \frac{1}{y^i - y^j } + 
\sumjm \frac{ y^i - x^i }{( x^i - w _t^j )( y^i - w _t^j )}
.\end{align}
Then, using \eqref{:11p} and \eqref{:32d}, we obtain \thetag{2}. 
\qed\end{proof}
We define the probability measure on $ \Rm $ by 
\begin{align}\label{:32z}&
 \muwm = \pPi ( \wm _t \, \in \cdot\, | \wms _0 )
.\end{align}
Let $ \OTwme $ be as in \eqref{:30i}. 
Let $ \OTwme (t) $ be the cross section of $ \OTwme $ such that 
\begin{align}\label{:32zz}&
 \OTwme (t) = \{ \xm \in \Rm \, ;\, (\xm , t ) \in \OTwme \} 
.\end{align}

\noindent {\em Proof of \tref{l:12}. } 
We consider the time-inhomogeneous heat equation on $ \QTwme $ 
such that the associated backward equation is given by 
\begin{align}\label{:38y} 
\Big\{
\PD{}{t} + \half \sum_{|i|< \m }( \PD{}{x^i})^2 &
+ \sum_{|i|,|j|< \m , i\ne j } \frac{1}{x^i-x^j} \PD{}{x^i}
 \\ \notag &
 + \sum_{|i|< \mj } \frac{1}{x^i-w_t^j} \PD{}{x^i} \Big\} p (t,x,u,y) = 0 
\end{align}
and the forward equation is given by 
\begin{align}\label{:38z}
\Big\{ 
\PD{}{u} - \half \sum_{|i|< \m }( \PD{}{x^i})^2 &
- \sum_{|i|, |j|< \m , i\ne j } \frac{1}{x^i-x^j} \PD{}{x^i}
 \\ \notag &
 - \sum_{|i|< \mj } \frac{1}{x^i-w_u^j} \PD{}{x^i} \Big\} p (t,x,u,y) = 0 
.\end{align}

From \lref{l:31}, we have constants $ \Ct \label{;38}$ and $ \alpha $ such that 
$ 0 < \alpha < 1 $ and 
\begin{align}\label{:38a}&
 | \bwmi ( \xm , t ) - \bwmi ( \ym , u ) | \le \cref{;38} \{ | \xm - \ym | + | t - u |^\alpha \} 
.\end{align}
From \eqref{:38a}, we can apply a general theorem of heat equations to determine that 
the fundamental solution (the transition probability density) of \eqref{:38y} and \eqref{:38z} on 
$ \OTwme $ under a Dirichlet boundary condition on the boundary is positive and continuous. 
Taking $ \epsilon \to 0 $ and using the obvious inequality 
such that the heat kernel dominates that with the Dirichlet boundary condition, 
we find that the heat kernel $ p (t,x,u,y) = p _{T,\w }^{ \m , 0 } (t,x,u,y)$ 
on $ \OTwmz (t) \times \OTwmz (u) $ 
is a positive density of the transition probability with respect to the Lebesgue measure. 
Using \eqref{:12a}, we find 
\begin{align}\label{:38b}&
\int_{\mathbf{A}\times \mathbf{B}} p (0,x,t,y) dxdy = 0 
.\end{align}
From \eqref{:38b} and positivity of $ p $, we deduce that 
$ \mathbf{A}$ or $ \mathbf{B}$ have Lebesgue measure zero. Hence, either of the following hold: 
\begin{align}\label{:38c}&
 \Pwm ( \wm _0 \in \mathbf{A}) = 
\int_{\mathbf{A}\times \OTwmz (t) } p (0,x,t,y) dxdy = 0
\end{align}
or 
\begin{align}\label{:38d}&
 \Pwm ( \wm _t \in \mathbf{B}) = 
\int_{ \OTwmz (0) \times \mathbf{B}} p (0,x,t,y) dxdy = 0 
.\end{align}
We thus obtain \tref{l:12}. 
\qed

\medskip 

\noindent {\em Proof of \tref{l:13}. } 
Using \eqref{:13a} and Fubini's theorem, we deduce \eqref{:12a}. 
Then, applying \tref{l:12}, we have that 
\begin{align}\notag %\label{:39a}&
\text
{$ \Pwm ( \wm _0 \in \mathbf{A}) = 0 $ for $ \1 $-a.s.\,$ \w $}
\end{align}
or
\begin{align}\notag &
\text{
$ \Pwm ( \wm _t \in \mathbf{B}) = 0 $ for $ \1 $-a.s.\,$ \w $}
.\end{align}
Integrating these with respect to $ \1 $, we conclude that \tref{l:13} holds from \eqref{:12q}. 
\qed

\section{Proof of \tref{l:11} }\label{s:5}
Let $ \Wms = C([0,\infty); \Rms )$. 
Let $ \map{\varpi ^{ \m *}}{ \W }{\Wms } $ be the projection such that 
$ \w = (w^i)_{i\in \mathbb{Z} } \mapsto \wms = ( w ^i)_{|i| \ge \m } $. 
Let 
\begin{align}& \notag %label{:51a}
\mathbf{T} = \{ \mathbf{t}=(t_1,\ldots,t_l)\, ;\,
0 < t_k < t_{ k +1} \, (1\le k < l ),\, \ l \in \mathbb{N}\}
.\end{align}
We set 
$ \varpi _{\mathbf{t}}^{\m *} (\w ) = \wms _{\mathbf{t}}= (w_{\mathbf{t}}^i)_{|i| \ge \m }$, 
where $ w _{\mathbf{t}}^i = (w_{t_1}^i,\ldots,w _{t_l}^i)$, and 
\begin{align}&\notag %\label{:51b}&
\vvvv  = \bigvee_{\mathbf{t} \in \mathbf{T}} \bigcap_{\m =1}^{\infty} 
\sigma [\varpi _{\mathbf{t}}^{\m *}]
.\end{align}
We know that $ \mu $ is tail trivial \cite{o-o.tail,ly.18}. That is, $ \mu (A) \in \{ 0,1 \} $ for each $ A \in \mathcal{T}(\sSS ) $, where 
\begin{align*}& \mathcal{T}(\sSS ) = \bigcap_{\rR = 1}^{\infty} 
\sigma [\piR ^c ]
.\end{align*}
The tail triviality of $ \mu $ can be refined to the triviality of $\vvvv  $ with respect to $ \pPi $ using \lref{l:52}. 
The triviality of $\vvvv  $ with respect to $ \pPi $ is one of the critical properties in the proof of the uniqueness of solutions to ISDEs in \cite{o-t.tail}. 
We require a rather difficult argument for the proof of this fact. 
\begin{lemma}\label{l:52}
$\vvvv  $ is trivial with respect to $ \pPi $. That is, 
\begin{align*}&
\text{$ \pPi ( \AAAA  ) \in \{ 0,1 \} $ for each $ \AAAA  \in \vvvv  $}
.\end{align*}\end{lemma}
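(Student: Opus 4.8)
The plan is to transfer the tail triviality of the sine$_2$ random point field $\mu$ to the triviality of $\vvvv$ with respect to $\pPi$, exploiting the consistency structure from \sref{s:2} together with the Markov property of the unlabeled diffusion $\XX$. First I would fix $\AAAA \in \vvvv$. By definition of $\vvvv$ as a join over $\mathbf{t} \in \mathbf{T}$ of the $\sigma$-algebras $\bigcap_{\m} \sigma[\varpi_{\mathbf{t}}^{\m *}]$, and because $\pPi$ is a probability measure (so its sigma-algebra completion behaves well under countable operations), it suffices to prove the triviality of each $\bigcap_{\m=1}^\infty \sigma[\varpi_{\mathbf{t}}^{\m *}]$ for a fixed time vector $\mathbf{t}=(t_1,\dots,t_l)$, and then check that the triviality is preserved under the join — the latter is where one uses that a join of independent trivial $\sigma$-algebras, or more carefully an increasing-in-$\mathbf t$ filtration argument, stays trivial. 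So the core is: for fixed $\mathbf t$, show $\pPi(\AAAA) \in \{0,1\}$ for $\AAAA \in \bigcap_\m \sigma[\varpi_{\mathbf t}^{\m*}]$.

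For this core step I would argue as follows. An event in $\sigma[\varpi_{\mathbf t}^{\m*}]$ depends only on the coordinates $(w^i_{t_k})_{|i|\ge \m,\, 1\le k\le l}$, i.e.\ on the positions at the times $t_k$ of the particles labeled outside $\{-\m,\dots,\m-1\}$. The unlabeling identity $\upathm(\w^{[\m]})_t = \ulab(\w_t)$ and \lref{l:21}--\lref{l:22} let me re-express the law of $(\wms_{t_k})_k$ in terms of the unlabeled configuration-valued process started from $\mui$-distributed initial data; an event lying in $\bigcap_\m \sigma[\varpi_{\mathbf t}^{\m*}]$ therefore does not see any fixed finite set of labeled particles and so corresponds to a tail event for the configuration dynamics. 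Using that the unlabeled diffusion $\XX(t)$ is $\mu$-reversible and Markov, and that $\mu$ is tail trivial, I would then run the standard argument: condition on $\mathscr{F}_{[t_1,\infty)}$-type information, push the tail event through the Markov semigroup, and use that a tail-measurable function of a $\mu$-reversible, tail-trivial stationary Markov process is a.s.\ constant. Concretely this is a Kolmogorov $0$–$1$-type statement: the event $\AAAA$, being in $\bigcap_\m \sigma[\varpi_{\mathbf t}^{\m*}]$, is measurable with respect to the tail $\sigma$-field of the configuration process at each time $t_k$, hence (by tail triviality of $\mu$ and stationarity) has probability $0$ or $1$ under the one-time marginals, and the Markov/consistency structure forces the joint probability to agree.

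The main obstacle I anticipate is making precise the step ``event in $\bigcap_\m \sigma[\varpi_{\mathbf t}^{\m*}]$ $\Rightarrow$ tail event for $\mu$ at each time.'' One must show that excluding the finitely many labeled coordinates $|i|<\m$ for every $\m$ really does land one inside $\bigcap_{\rR}\sigma[\piR^c]$ at each time slice — this requires controlling the correspondence between the label index $|i|\ge\m$ and spatial truncation $\piR^c$, which is not literally the same filtration (labels versus spatial windows), so one needs the a.s.\ growth estimate $\sss([-\rR,\rR])/\rR \to \int \sss([-1,1])d\mu$ used to define $\sSS_n$ in \eqref{:11o}, together with \eqref{:11p}, to say that ``large $|i|$'' and ``large spatial distance'' are cofinal. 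A second, more technical point is handling the join over $\mathbf t \in \mathbf T$: since $\mathbf T$ is countable one can enumerate, but one must verify that passing from triviality at each fixed finite time-configuration to triviality of the generated $\sigma$-algebra does not fail — here I would invoke the Markov property of $\XX$ once more, writing a general $\AAAA$-indicator as a limit of cylinder functions in the times $t_k$ and using the semigroup to collapse them. I expect the authors to handle these two points with the Lyons--Zheng/consistency machinery already set up, and the write-up should lean on \cite{o-t.tail} for the detailed $0$–$1$ law for the configuration diffusion.
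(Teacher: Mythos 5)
The paper does not prove this lemma from scratch: its entire proof is the citation ``\lref{l:52} follows directly from Theorem 5.3 in \cite{o-t.tail}'', and the authors explicitly warn just before the statement that ``we require a rather difficult argument for the proof of this fact.'' Your proposal is an attempt to reconstruct that difficult argument, and while its overall direction (transfer tail triviality of $\mu $ to the path space via consistency and the Markov property) matches what \cite{o-t.tail} does, as written it has genuine gaps precisely at the points where the cited theorem does the work. The most concrete problem is the assertion that an event $\AAAA \in \bigcap_{\m }\sigma [\varpi _{\mathbf{t}}^{\m *}]$ ``is measurable with respect to the tail $\sigma $-field of the configuration process at each time $t_k$'': an event depending jointly on several times $t_1<\dots <t_l$ is not measurable with respect to any single-time $\sigma $-field, so tail triviality of the one-time marginal $\mu $ cannot be applied time-slice by time-slice; the step ``the Markov/consistency structure forces the joint probability to agree'' is exactly the content of the theorem you are trying to prove, not a routine consequence of it. One needs, as in \cite{o-t.tail}, that the transition semigroup preserves tail measurability quasi-everywhere and that the finite-time distributions are absolutely continuous with respect to $\mu $, and then an induction over the times $t_1,\dots ,t_l$; none of this is supplied. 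The second gap you yourself flag --- the passage from the label-based truncation $|i|\ge \m $ to the spatial truncation $\pi _{\rR }^c$ --- is also left unresolved; \eqref{:11o} and \eqref{:11p} give the cofinality of the two filtrations at time $0$, but you still need it simultaneously at all times $t_k$ along the path, which again requires the dynamical input from \cite{o-t.tail}.

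By contrast, the point you worry about most at the level of bookkeeping --- the join over $\mathbf{t}\in \mathbf{T}$ --- is actually the easy part and needs no independence or filtration argument: the collection of events $\AAAA $ with $\pPi (\AAAA )\in \{0,1\}$ is itself a $\sigma $-algebra (it is closed under complements and countable unions), so once each $\bigcap_{\m }\sigma [\varpi _{\mathbf{t}}^{\m *}]$ is trivial, the join $\vvvv $ is automatically trivial. In summary: your plan identifies the right ingredients and correctly anticipates that the proof must lean on \cite{o-t.tail}, but it does not constitute a proof of the core single-$\mathbf{t}$ triviality; if you intend the argument to be self-contained you must carry out the semigroup/tail-preservation and label-versus-spatial steps, and otherwise you should simply cite Theorem 5.3 of \cite{o-t.tail} as the paper does.
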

\begin{proof}
\lref{l:52} follows directly from Theorem 5.3 in \cite{o-t.tail}. 
\qed\end{proof}

\medskip 
\noindent 
{\em Proof of \tref{l:11}. } 
Recall that $ \varpi _t^{ \m *}(\w ) = (w_t^i)_{m \le |i| }$ for $ \w = (w^i)_{i\in \Z }$. 
We set 
\begin{align*}&
\uuuu =  \sigma [ \varpi _0^{ \m *}, \varpi _t^{ \m *}] 
.\end{align*}
Let $ \map{\varpi ^{ \m }}{ \W }{\Wm } $ be the projection such that 
$ \w = (w^i)_{i\in \mathbb{Z} } \mapsto \wm = ( w ^i)_{|i| < \m } $, where 
$ \Wm = C([0,\infty); \Rm ) $. 
We set $ \varpi _u (\w ) = \w _u $, $ \varpi _u^{ \m } (\w ) = \w _u^{ \m } $, and 
$ \varpi _u^{ \m *} (\w ) = \w _u^{ \m *} $. 
For $ \AAAA $ and $ \BBBB  \subset \W $, we set 
\begin{align*}
& \mathbf{A}_0 = \varpi _0(\AAAA ), && \Am =  \varpi _0^{ \m } (\AAAA ),  &&
\Ams = \varpi _0^{ \m *} (\AAAA )
,\\
& \mathbf{B}_t = \varpi _t (\BBBB ), &
& \Bm =  \varpi _t^{ \m } (\BBBB ), 
&&\Bms = \varpi _t^{ \m *} (\BBBB ) 
 .\end{align*}
Let $ \mathbf{A}$ and $\mathbf{B}$ be as in the statement of \tref{l:11}. 
We take $ \AAAA =  \varpi _0^{-1} (\mathbf{A})$ and 
$ \BBBB =  \varpi _t^{-1} (\mathbf{B})$. Then we find $ \mathbf{A}=\mathbf{A}_0$ and 
$ \mathbf{B}=\mathbf{B}_t$. 
Noting $ \Ams , \Bms \in \uuuu $ and using \eqref{:12q}, we deduce that, for $ \1 $-a.s.\,$ \w $, 
\begin{align}\label{:52p} &
\Pwm ( \wm _0 \in \Am , \wm _t \in \Bm | \uuuu )    \6 
\\ \notag 
= &\Pwm ( \wm _0 \in \Am , \wm _t \in \Bm , \wms _0 \in \Ams , \wms _t \in \Bms 
 | \uuuu )   
\\ \notag 
= &\Pwm ( \7 | \uuuu )   
\\ \notag 
= &\pPi ( \7 | \uuuu )    \quad \text{ by \eqref{:12q}}
.\end{align}
From \eqref{:52p} and \eqref{:11b}, we have 
\begin{align}\notag &%label{:52c}&
 \int_{\W } \Pwm ( \wm _0 \in \Am , \wm _t \in \Bm | \uuuu )    \6 \1 (d\w ) 
\\ \notag 
= &\int_{\W } \pPi ( \7 | \uuuu )   \1 (d\w ) \quad \text{ by \eqref{:52p}}
\\ \notag 
= &\pPi ( \7 )
= 0 \quad \text{ by \eqref{:11b}}
.\end{align}
Using this, we obtain, for $ \1 $-a.s.\,$ \w $, 
\begin{align} &\notag 
\Pwm ( \wm _0 \in \Am ,\, \wm _t \in \Bm | \uuuu )    \6 = 0 
.\end{align}
From this, we easily deduce, for $ \1 $-a.s.\,$ \w $, 
\begin{align}\label{:52d}&
\Pwm ( \wm _0 \in \Am ,\, \wm _t \in \Bm ) \6 = 0 
.\end{align}
Using \eqref{:52d} and \tref{l:12},  we deduce 
\begin{align}&\label{:52FFF}
 \Pwm ( \wm _0 \in \Am ) \6 = 0 \quad \text{ for $ \1 $-a.s.\,$ \w $}
\\\intertext{or } \label{:52Fff} &
 \Pwm ( \wm _t \in \Bm ) \6 = 0  \quad \text{ for $ \1 $-a.s.\,$ \w $}
.\end{align}

Suppose \eqref{:52FFF}. 
Then, using $ \1 = \pPi \circ ( \varpi ^{ \m *} )^{-1}$, we obtain 
\begin{align}&\label{:52F}
 \Pwm ( \wm _0 \in \Am ) \6 = 0 \quad \text{for $ \pPi $-a.s.\,$ \w $}
.\end{align}
Taking $ \BBBB  = \W $ in \eqref{:52p} and using $ \1 = \pPi \circ ( \varpi ^{ \m *} )^{-1}$, 
 we obtain	
\begin{align} \notag &% \label{:52ff}&
\Pwm ( \wm _0 \in \Am ) \5 =  \pPi (\wA  | \uuuu )    
 \quad \text{ for $ \pPi $-a.s.\,$ \w $}
.\end{align}
Hence, we deduce
\begin{align}\label{:52x}&
 \Pwm ( \wm _0 \in \Am )\6 
\\ \notag =&
 \pPi (\wA  | \uuuu )   \Bmsws %1_{\Bms }(\w _t )
 \quad \text{ for $ \pPi $-a.s.\,$ \w $}
%\\ \notag = &  \pPi (\wA , \, \w _t \in {\Bms }[] \Bmsws | \uuuu )   
.\end{align}
From \eqref{:52F} and \eqref{:52x}, we obtain 
\begin{align} \label{:52f}&
 \pPi (\wA  | \uuuu )   \Bmsws = 0 
\quad \text{ for $ \pPi $-a.s.\,$ \w $}
\end{align}
Integrating \eqref{:52f} with respect to $ \pPi $, we obtain %  for each $ \m \in \N $
\begin{align}\label{:52g} & 
 \int_{\W } \pPi (\wA  | \uuuu )   
\Bmsws %[]1_{\Bms }(\w _t )
\pPi (d\w ) 
=0 
.\end{align}
Next, suppose \eqref{:52Fff}. 
Then, similarly as \eqref{:52g}, we obtain %  for each $ \m \in \N $
\begin{align} 
\label{:52G} & 
 \int_{\W } \pPi ( \wB  | \uuuu )  
 \5 
 \pPi (d\w ) = 0
.\end{align}
Thus, we see either \eqref{:52g} or \eqref{:52G} holds for each $  m \in \N $. 
Hence,  \eqref{:52g} holds for infinitely many $ m \in\N $ or 
\eqref{:52G} holds for infinitely many $ m \in\N $. 

Note that the sequence of $ \sigma $-fields $ \{ \uuuu \}_{\m \in \N } $ is decreasing. 
Furthermore, the sequences of sets 
\begin{align*}&
\text{
$ \{ (\varpi _0^{ \m *})^{-1}(\Ams ) \}_{\m \in \N }$
 and 
$ \{ ( \varpi _t^{ \m *})^{-1}(\Bms ) \}_{\m \in \N }$
}
\end{align*}
are increasing and the limits 
\begin{align*}&
\text{
$ \widetilde{\AAAA }:=\bigcup_{m=1}^{\infty} (\varpi _0^{ \m *})^{-1}(\Ams ) $ and 
$ \widetilde{\BBBB }:=\bigcup_{m=1}^{\infty} ( \varpi _t^{ \m *})^{-1}(\Bms ) $}
\end{align*}
are $ \vvvv $-measurable. The sets $ \widetilde{\AAAA }$ and $ \widetilde{\BBBB }$ 
contain $ \AAAA  $ and $ \BBBB $, respectively. 
Hence, using the martingale convergence theorem and the Lebesgue convergence theorem, 
we find that, $ \pPi $-a.s.\,and in $ L^1(\W ,\pPi ) $, % as $ \m \to \infty $, 
\begin{align} \label{:52h} 
\limi{\m } & \pPi (\wA  | \uuuu )   \Bmsws 
\\\notag  = &
\pPi (\wA  | \bigcap_{\m =1 }^{\infty} \uuuu )   
1_{\widetilde{\BBBB }} (\w )
,\\\label{:52i}
 \limi{\m }& \pPi (\wB  |\uuuu )   \5 
\\\notag = &
\pPi (\wB  | \bigcap_{\m =1 }^{\infty}\uuuu )   
1_{\widetilde{\AAAA }} (\w )
.\end{align}
From \lref{l:52} and $ \bigcap_{\m =1 }^{\infty}\uuuu \subset \vvvv  $, 
we deduce $ \pPi ( \widetilde{\AAAA }) \in \{ 0 , 1 \} $. 
Furthermore,  $ \{ \w ; \wA \} \subset \widetilde{\AAAA } $ by construction. Hence, 
\begin{align}\label{:52k}
\int_{\AAAA }\pPi (\wA | \bigcap_{\m =1 }^{\infty} \uuuu )   d\pPi 
= & \pPi (\AAAA ) \pPi (\wA  ) 
.\end{align}
Similarly, we have 
\begin{align}\label{:52l}
\int_{\BBBB }\pPi (\wB | \bigcap_{\m =1 }^{\infty} \uuuu )   d\pPi 
= & \pPi (\BBBB ) \pPi (\wB  ) 
.\end{align}

Suppose  $ \pPi ( \widetilde{\AAAA }) = 0 $. Then 
$ \pPi (\wA ) = 0 $ because $ \{ \w ; \wA \} \subset \widetilde{\AAAA } $. 
Suppose $ \pPi ( \widetilde{\AAAA }) = 1 $. 
If, in addition, \eqref{:52g} holds for infinitely many $ m \in \N $, 
then from \eqref{:52g}, \eqref{:52h}, and \eqref{:52k}, we deduce $ \pPi (\wA  ) = 0 $. 

Similarly, $ \pPi ( \widetilde{\BBBB }) = 0 $ implies $ \pPi (\wB ) = 0 $. 
If $ \pPi ( \widetilde{\BBBB }) = 1 $ and 
\eqref{:52G} holds for infinitely many $ m \in \N $, 
then $ \pPi (\wB  ) = 0 $ from \eqref{:52G}, \eqref{:52i}, and \eqref{:52l}. 

Combining  these and recalling $ \mathbf{A}=\mathbf{A}_0$ and 
$ \mathbf{B}=\mathbf{B}_t$ complete the proof. 
\qed

\section{ Acknowledgment}
{This work was supported by JSPS KAKENHI Grant Numbers JP16H06338, JP20K20885, JP21H04432, and JP18H03672. 
We thank Stuart Jenkinson, PhD, from Edanz Group (https://jpen-author-services.edanz.com/ac) for editing a draft of this manuscript.}

{
\small 
\noindent 
Hirofumi Osada\\
Faculty of Mathematics, Kyushu University, \\ Fukuoka 819-0395, Japan. \\
\texttt{osada@math.kyushu-u.ac.jp} \\
 Ryosuke Tsuboi
\bs

\noindent 
Financial Institutions BU, Financial Information Systems 2233G, Hitachi, Ltd. 
\\ Tokyo 104-0045, Japan
\\
\texttt{fermathe1123@gmail.com}
}
\end{document}